\newcommand{\Z}{{\mathbb Z}}
\begin{document}

\title{On the Scaled Inverse of $(x^i-x^j)$ \\ modulo Cyclotomic Polynomial\\ of the form $\Phi_{p^s}(x)$ or $\Phi_{p^s q^t}(x)$}
\titlerunning{On the Scaled Inverse of $(x^i-x^j)$}
\author{Jung Hee Cheon
\inst{1, 4}
\and
Dongwoo Kim
\inst{2}\thanks{Work done while at Seoul National University.}
\and
Duhyeong Kim
\inst{3}\thanks{Work done while at Seoul National University.}
\and
Keewoo Lee$^{(\textnormal{\Letter})}$
\inst{1}
}

\authorrunning{J. H. Cheon et al.}
\institute{Seoul National University, Seoul, Republic of Korea\\
\email{\{jhcheon, activecondor\}@snu.ac.kr} \and
Western Digital Research, Milpitas, USA\\
\email{Dongwoo.Kim@wdc.com} \and
Intel Labs, Hillsboro, USA\\
\email{duhyeong.kim@intel.com} \and
Crypto Lab Inc., Seoul, Republic of Korea} 
\maketitle              %
\begin{abstract}
The scaled inverse of a nonzero element $a(x)\in \mathbb{Z}[x]/f(x)$, where $f(x)$ is an irreducible polynomial over $\mathbb{Z}$, is the element $b(x)\in \mathbb{Z}[x]/f(x)$ such that $a(x)b(x)=c \pmod{f(x)}$ for the smallest possible positive integer scale $c$. In this paper, we investigate the scaled inverse of $(x^i-x^j)$ modulo cyclotomic polynomial of the form $\Phi_{p^s}(x)$ or $\Phi_{p^s q^t}(x)$, where $p, q$ are primes with $p<q$ and $s, t$ are positive integers. Our main results are that the coefficient size of the scaled inverse of $(x^i-x^j)$ is bounded by $p-1$ with the scale $p$ modulo $\Phi_{p^s}(x)$, and is bounded by $q-1$ with the scale not greater than $q$ modulo $\Phi_{p^s q^t}(x)$. Previously, the analogous result on cyclotomic polynomials of the form $\Phi_{2^n}(x)$ gave rise to many lattice-based cryptosystems, especially, zero-knowledge proofs. Our result provides more flexible choice of cyclotomic polynomials in such cryptosystems. Along the way of proving the theorems, we also prove several properties of $\{x^k\}_{k\in\mathbb{Z}}$ in $\mathbb{Z}[x]/\Phi_{pq}(x)$ which might be of independent interest. 
\end{abstract}

\section{Introduction}

Cyclotomic polynomials play an important role in algebra, number theory, combinatorics, and their applications. 
In particular, modern lattice-based cryptography intensively employs cyclotomic rings $\Z[x]/\Phi_M(x)$ \cite{LPR10,LPR13}.

An interesting subclass of cyclotomic polynomials is of the form $\Phi_{p^s q^t}(x)$, where $p, q$ are primes with $p<q$ and $s, t$ are positive integers. 
Since cyclotomic polynomials of the form $\Phi_{p^s}(x)$ are just $\sum_{i=0}^{p-1}x^{ip^{s-1}}$, the case with two prime factors can be seen as the simplest non-trivial case. There have been various interesting results on these cyclotomic polynomials \cite{Bei64,HLLP12,Fou13}. For instance, these cyclotomic polynomials have only $\{-1,0,1\}$ as coefficients, whereas a cyclotomic polynomial of a product of three distinct odd primes can have an arbitrarily large coefficient \cite{Leh36}.

Benhamouda et. al. \cite{BCK+14} provided the following lemma, which was used to construct more efficient zero-knowledge proofs for lattice-based cryptosystems. The construction is being widely used \cite{BCS19,CKR+20}.
\begin{lemma}[\cite{BCK+14}] \label{lem:2s}
Let $M=2^s$ be a power-of-two. For any $i,j \in \Z$ satisfying $0\leq j<i<M$, there exists $u(x) \in \Z[x]/\Phi_{M}(x)$ such that \begin{itemize}[label=$\bullet$]
        \item $(x^i - x^j) \cdot u(x) = 2 \pmod{\Phi_{M}(x)}$
        \item and $||u(x)||_{\infty} \le 1$.
    \end{itemize}
\end{lemma}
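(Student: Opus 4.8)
The plan is to reduce the statement to the single–binomial case $x^k-1$ and then write down an explicit geometric–series formula for its scaled inverse. Write $\Phi_{2^s}(x)=x^n+1$ with $n=2^{s-1}$ and work in $R=\Z[x]/(x^n+1)$. Since $x^{2^s}=(x^n)^2=1$ while $x^n=-1\neq 1$ in $R$, the element $x$ is a unit of multiplicative order exactly $2^s$, and every power $x^e$ reduces in $R$ to $\pm x^{e'}$ for a single $e'\in\{0,\dots,n-1\}$; hence multiplication by any power of $x$ is a signed permutation of the basis $1,x,\dots,x^{n-1}$, i.e. an $\ell_\infty$-isometry of $R$. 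Factoring $x^i-x^j=x^j(x^{i-j}-1)$ and putting $k=i-j\in\{1,\dots,2^s-1\}$, it therefore suffices to find $u_0\in R$ with $(x^k-1)\,u_0=2$ and $\|u_0\|_\infty\le 1$; then $u(x):=x^{-j}u_0$ has the same $\ell_\infty$-norm and satisfies $(x^i-x^j)u(x)=2$.

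For the reduced problem, I would let $N_0$ be the least positive integer with $x^{kN_0}=-1$ in $R$, equivalently the least positive solution of $kN\equiv n \pmod{2^s}$. Writing $k=2^am$ with $m$ odd (so $0\le a\le s-1$ and $\gcd(k,2^s)=2^a$, $\mathrm{ord}(x^k)=2^{s-a}$), this congruence is solvable since $2^a\mid n=2^{s-1}$; its solution set is a single residue class modulo $2^{s-a}=\mathrm{ord}(x^k)$, and $N=0$ is not a solution, so $1\le N_0<\mathrm{ord}(x^k)$. I then set
\[
u_0\;:=\;-\sum_{l=0}^{N_0-1}x^{kl},
\]
and telescoping the geometric series gives $(x^k-1)u_0=-\bigl(x^{kN_0}-1\bigr)=-(-1-1)=2$, as required.

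The heart of the argument is the bound $\|u_0\|_\infty\le 1$. Because $N_0<\mathrm{ord}(x^k)$, no two exponents $kl\bmod 2^s$ with $0\le l\le N_0-1$ can coincide (a coincidence would force $\mathrm{ord}(x^k)\mid l'-l$ for some $0<l'-l<\mathrm{ord}(x^k)$). Each term $x^{kl}$ reduces in $R$ to $+x^{m}$ if $kl\bmod 2^s=m<n$ and to $-x^{m}$ if $kl\bmod 2^s=m+n$ (using $x^{m+n}=x^nx^m=-x^m$). Hence, for a fixed monomial $x^m$ with $0\le m<n$, among the indices $l\in\{0,\dots,N_0-1\}$ there is at most one contributing $+x^m$ and at most one contributing $-x^m$, so the coefficient of $x^m$ in $\sum_l x^{kl}$ lies in $\{-1,0,1\}$; in particular the two contributions cancel exactly when both occur. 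Combining with the first paragraph completes the proof.

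I expect the last step to be the main obstacle: one must notice that the naive ``geometric-series'' inverse already has ternary coefficients, and see precisely why — namely the interplay between $N_0$ lying strictly below the order of $x^k$ (which rules out exponent collisions inside the window $\{0,\dots,N_0-1\}$) and the relation $x^n=-1$ (which forces the only possible collisions of reduced monomials to cancel). Terminating the series at $x^{kN_0}=-1$ rather than at $x^{kN_0}=1$ is what simultaneously produces the scale $2$ and keeps the sum short enough for the distinctness argument to go through; the reduction in the first paragraph is then purely formal.
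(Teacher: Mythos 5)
Your proof is correct, and it is essentially the paper's own argument: the paper cites this lemma but proves its generalization (Theorem~\ref{thm:ps}), whose construction $-x^{M-j}\cdot\frac{\Phi_M(x^{\beta})-p}{x^{p^{\alpha}\beta}-1}$ specializes at $p=2$ to exactly your $-x^{-j}\sum_{l=0}^{N_0-1}x^{kl}$ (your $N_0$ equals $2^{s-1-a}$), and its norm bound likewise rests on the exponents being distinct modulo $M$ before the final reduction. Your phrasing via the signed permutation induced by $x^n=-1$ is a clean equivalent of the paper's two-step reduction modulo $x^M-1$ and then modulo $\Phi_M(x)$.
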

Later, Lemma~\ref{lem:2s} was extended to the case of $M$ being a prime $p$ \cite{CKL21}.

In this paper, we generalize these phenomena as the \emph{scaled inverses} modulo cyclotomic polynomials (Definition~\ref{def:scaled_inv}).  
The scaled inverse of a nonzero element $a(x)\in \Z[x]/f(x)$, where $f(x)$ is an irreducible polynomial over $\Z$, is the element $b(x)\in \Z[x]/f(x)$ such that $a(x)b(x)=c \pmod{f(x)}$ for the smallest possible positive integer scale $c$. 
We investigate the scaled inverse of $(x^i-x^j)$ modulo cyclotomic polynomials of the form $\Phi_{p^s}(x)$ or $\Phi_{p^s q^t}(x)$. 

First, we generalize the previous results \cite{BCK+14,CKL21} to $\Phi_{p^s}(x)$ case: the coefficient size of the scaled inverse of $(x^i-x^j)$ is bounded by $p-1$ with the scale $p$ modulo $\Phi_{p^s}(x)$ (Theorem~\ref{thm:ps}). 
Second, we extend the results to $\Phi_{p^s q^t}(x)$ case: the coefficient size of the scaled inverse of $(x^i-x^j)$ is bounded by $q-1$ with the scale not greater than $q$ modulo $\Phi_{p^s q^t}(x)$ (Theorem~\ref{thm:psqt2}).

Our results have applications in cryptography. For instance, they are closely related to the efficiency and quality of certain zero-knowledge proofs regarding lattice-based cryptosystems with $\Z_{2^k}$-messages~\cite{CKL21}.\footnote{Utilization of $\Phi_{2^n}(x)$ cyclotomic rings gives much worse efficiency in this case.}

Along the way of proving the theorems, we prove several properties of $\{x^k\}_{k\in\Z}$ in $\Z[x]/\Phi_{pq}(x)$ which might be of independent interest (Section~\ref{sec:prop_pq}). We also investigate so-called \emph{expansion factors} of $\{x^k\}_{k\in\Z}$ in $\Z[x]/\Phi_{p^s}(x)$ and $\Z[x]/\Phi_{p^s q^t}(x)$, which also play important roles in zero-knowledge proofs regarding lattice-based cryptosystems \cite{CKL21}. The \emph{expansion factor} of $f(x)$ in $\Z[x]/\Phi_M(x)$ is defined as the maximum value of $(||f(x)\cdot g(x)||_\infty / ||g(x)||_\infty)$ (Section~\ref{sec:expansion}).

\section{Preliminaries}

\subsection{Notations}

In this subsection, we list notations which we will use throughout the paper, especially the ones which might be ambiguous to some readers. 

\begin{itemize}
    \item Throughout the paper, $p, q$ are primes satisfying $p<q$, and $s, t$ are positive integers, even if they are not explicitly mentioned.
    \item We denote the $M$th cyclotomic polynomial as $\Phi_M(x)$ and denote the Euler's totient function as $\phi(\cdot)$, i.e. $\phi(M) = \deg \Phi_M(x)$.
    \item We carefully distinguished the use of ``$\bmod{\Phi_M(x)}$''\footnote{\textsf{bmod} in \LaTeX} and ``$\pmod{\Phi_M(x)}$''\footnote{\textsf{pmod} in \LaTeX}. 
    We use ``$\bmod{\Phi_M(x)}$'' as a unary function which reduces the input polynomial modulo $\Phi_M(x)$ so that the degree of the output is less than $\phi(M)$. On the other hand, we use ``$\pmod{\Phi_M(x)}$'' to express a certain equality holds for residue classes under $\Phi_M(x)$. For example, ``$a(x)=b(x)\bmod{\Phi_M(x)}$'' says that, when $b(x)$ is fully reduced modulo $\Phi_M(x)$, the result is exactly equal to $a(x)$ as a polynomial in $\Z[x]$. On the contrary, ``$a(x)=b(x) \pmod{\Phi_M(x)}$'' says that $a(x)$ and $b(x)$ belong to same residue class under $\Phi_M(x)$.  
    \item We define the maximum norm $||\cdot||_\infty$ of $f(x)\in\Z[x]$ as the largest absolute value of coefficients of $f(x)$. We define the maximum norm $||\cdot||_\infty$ of $g(x)\in\Z[x]/\Phi_M(x)$ as the largest absolute value of coefficients of $(\tilde{g}(x)\bmod{\Phi_M(x)})$, where $\tilde{g}(x)\in \Z[x]$ is a representative of $g(x)$.
    \item For a polynomial $a(x)\in\Z[x]$, we denote the \emph{reverse polynomial} of $a(x)$ as $\mathsf{rev}(a(x))$, i.e. $\mathsf{rev}(a(x))=x^{\deg(a(x))}\cdot a(1/x)$.
    \item We denote the interval $\{i\in\Z|c \leq i \leq d\}$ as $[c,d]$.
    \item We denote the greatest common divisor of $a$ and $b$ as $(a,b)$.
\end{itemize}

\subsection{Properties of Cyclotomic Polynomials}

In this subsection, we recall and give short proofs on properties of cyclotomic polynomials $\Phi_p(x)$ and $\Phi_{pq}(x)$, which will be frequently used in the remaining parts of this paper. We only assume knowledge on basics of cyclotomic polynomials and very light knowledge on generating functions.

\begin{lemma}\label{lem:properties}
~
\begin{enumerate}[label=\textnormal{(\alph*)}]
    \item $\Phi_{pq}(1)=1$, i.e. $\frac{\Phi_{pq}(x)-1}{x-1}$ is a polynomial.
    \item $\Phi_{pq}(x)$ is symmetric, i.e. $\mathsf{rev}(\Phi_{pq}(x))=\Phi_{pq}(x).$
\end{enumerate}
\end{lemma}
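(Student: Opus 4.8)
The plan is to prove both parts using only the factorization identities among cyclotomic polynomials together with elementary generating-function manipulations, avoiding any appeal to the explicit coefficient formula for $\Phi_{pq}(x)$.

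For part (a), I would start from the standard identity $\prod_{d\mid n}\Phi_d(x) = x^n - 1$. Applied to $n = pq$, this gives $\Phi_1(x)\,\Phi_p(x)\,\Phi_q(x)\,\Phi_{pq}(x) = x^{pq}-1$, hence
\[
\Phi_{pq}(x) = \frac{(x^{pq}-1)(x-1)}{(x^p-1)(x^q-1)}.
\]
Now I want to evaluate at $x=1$. Since numerator and denominator both vanish at $x=1$, I would rewrite each factor $x^m-1 = (x-1)(1+x+\cdots+x^{m-1})$, so that
\[
\Phi_{pq}(x) = \frac{(1+x+\cdots+x^{pq-1})(x-1)\cdot(x-1)^{?}}{\cdots},
\]
more carefully: $\Phi_{pq}(x) = \dfrac{(x-1)^2\,(1+x+\cdots+x^{pq-1})}{(x-1)^2\,(1+\cdots+x^{p-1})(1+\cdots+x^{q-1})} = \dfrac{1+x+\cdots+x^{pq-1}}{(1+\cdots+x^{p-1})(1+\cdots+x^{q-1})}$. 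Evaluating at $x=1$ gives $\Phi_{pq}(1) = \dfrac{pq}{p\cdot q} = 1$. Finally, $\Phi_{pq}(1)=1$ means $x-1$ divides $\Phi_{pq}(x)-1$ in $\Z[x]$ (by the factor theorem, since $\Phi_{pq}(x)-1$ has $1$ as a root and is monic-ish with integer coefficients), which is exactly the claimed statement.

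For part (b), the cleanest route is to use the general fact that $\Phi_M(x)$ is palindromic for all $M > 1$, but since the excerpt wants a short self-contained proof, I would instead argue directly from the same rational expression. Writing $\Phi_{pq}(x) = \dfrac{(x^{pq}-1)(x-1)}{(x^p-1)(x^q-1)}$ and letting $x \mapsto 1/x$, each factor $x^m - 1$ becomes $x^{-m}-1 = -x^{-m}(x^m-1)$; tracking the powers of $x$ and the signs, $\Phi_{pq}(1/x) = x^{-\phi(pq)}\Phi_{pq}(x)$ where $\phi(pq) = (p-1)(q-1) = \deg\Phi_{pq}$. Multiplying through by $x^{\deg\Phi_{pq}(x)}$ yields $\mathsf{rev}(\Phi_{pq}(x)) = \Phi_{pq}(x)$. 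The bookkeeping of exponents — checking that $pq + 1 - p - q = (p-1)(q-1)$ and that the sign factors cancel — is routine but is the one place to be careful.

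I do not anticipate a real obstacle here; the only mild subtlety is making sure the generating-function cancellations are carried out over $\Z[x]$ (or equivalently that the rational function is genuinely a polynomial, which is guaranteed because $\Phi_{pq}(x)$ is), so that evaluation at $x=1$ and the substitution $x\mapsto 1/x$ are legitimate. I would state the factorization identity $\prod_{d\mid n}\Phi_d(x)=x^n-1$ as the only external input and derive everything else from it.
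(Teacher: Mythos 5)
Your proposal is correct and follows essentially the same route as the paper: both parts rest on the factorization $\Phi_p(x)\Phi_q(x)\Phi_{pq}(x)=\sum_{i=0}^{pq-1}x^i$, with (a) obtained by evaluating at $x=1$ and (b) by observing that $\Phi_{pq}(x)$ is a quotient of palindromic polynomials. Your explicit exponent bookkeeping under $x\mapsto 1/x$ is just a spelled-out version of the paper's one-line appeal to symmetry of the quotient.
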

\begin{proof}
~
\begin{enumerate}[label=\textnormal{(\alph*)}]
    \item Since $\Phi_{p}(x) \cdot \Phi_{q}(x) \cdot \Phi_{pq}(x) = \sum_{i=0}^{pq-1}x^i$ holds,  $p\cdot q \cdot \Phi_{pq}(1)=pq$.
    \item $\Phi_{pq}(x)$ can be written as $(\sum_{i=0}^{pq-1}x^i)/(\Phi_p(x)\cdot\Phi_q(x))$. Since $\Phi_p(x)\cdot\Phi_q(x)$ is symmetric, $\Phi_{pq}(x)$ is a quotient of symmetric polynomials, where the denominator divides the divisor. \qed
\end{enumerate}
\end{proof}

\begin{lemma}\label{lem:phi_pq_diff_coeff}
Denote the $i$th coefficient of $\frac{\Phi_{pq}(x)-1}{x-1}$ as $b_i$, i.e. $\frac{\Phi_{pq}(x)-1}{x-1} =\sum_i b_i\cdot x^i$.
Then, we can characterize $b_i$ as follows. 
$$b_i=
\begin{cases}
0 &\text{if } \alpha p + \beta q = i \text{ has a non-negative integer solution } (\alpha,\beta). \\
1 &\text{otherwise.}
\end{cases}$$
\end{lemma}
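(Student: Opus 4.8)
The plan is to prove the identity by reading coefficients off a formal power series expansion. First I would put $\Phi_{pq}(x)$ in rational form: from $\prod_{d\mid pq}\Phi_d(x)=x^{pq}-1$ together with $\Phi_1(x)=x-1$, $\Phi_p(x)=(x^p-1)/(x-1)$ and $\Phi_q(x)=(x^q-1)/(x-1)$, one obtains
\[
\Phi_{pq}(x)=\frac{(1-x^{pq})(1-x)}{(1-x^p)(1-x^q)}.
\]
Dividing by $x-1$ (using $\tfrac{1-x}{x-1}=-1$ and $\tfrac{1}{x-1}=-\tfrac{1}{1-x}$) then gives, as an identity in $\Z[[x]]$,
\[
\frac{\Phi_{pq}(x)-1}{x-1}=\frac{x^{pq}-1}{(1-x^p)(1-x^q)}+\frac{1}{1-x}.
\]
By Lemma~\ref{lem:properties}(a) the left-hand side is genuinely a polynomial, so it is legitimate to determine its coefficients from the right-hand side.

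Next I would expand the right-hand side. Writing $\frac{1}{(1-x^p)(1-x^q)}=\sum_{n\ge 0}r(n)x^n$, the coefficient $r(n)$ is exactly the number of pairs $(\alpha,\beta)\in\Z_{\ge 0}^2$ with $\alpha p+\beta q=n$, since $\frac{1}{1-x^p}=\sum_{\alpha\ge 0}x^{\alpha p}$ and similarly for $q$. Hence
\[
\frac{\Phi_{pq}(x)-1}{x-1}=\sum_{n\ge 0}\bigl(1-r(n)\bigr)x^n+\sum_{n\ge 0}r(n)\,x^{n+pq}.
\]
The key elementary fact is that $r(n)\le 1$ for $0\le n<pq$: if $\alpha p+\beta q=\alpha'p+\beta'q$ with $(\alpha,\beta)\neq(\alpha',\beta')$, then $(p,q)=1$ forces $q\mid(\alpha-\alpha')$, so the larger of $\alpha,\alpha'$ is $\geq q$, and in either case $n\geq pq$, a contradiction.

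Finally I would extract coefficients. The left-hand side has degree $\phi(pq)-1=pq-p-q<pq$, so for every index $i$ in the relevant range $0\le i\le pq-p-q$ — indeed for all $0\le i<pq$ — the second sum contributes nothing, whence $b_i=1-r(i)$. Combined with $r(i)\in\{0,1\}$ from the previous paragraph, this gives $b_i\in\{0,1\}$, and $b_i=0$ precisely when $r(i)\ge 1$, i.e. precisely when $\alpha p+\beta q=i$ admits a non-negative integer solution, which is the claim. I do not anticipate a real obstacle; the only points requiring care are keeping the distinction between polynomial and formal power series straight, and the small coprimality/degree bookkeeping that pins down $r(i)\in\{0,1\}$ throughout $0\le i<pq$.
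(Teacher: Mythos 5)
Your proof is correct and follows essentially the same route as the paper: both expand $\frac{\Phi_{pq}(x)-1}{x-1}$ as $\frac{1}{1-x}$ minus a product of geometric series in $x^p$ and $x^q$ whose coefficients count representations $\alpha p+\beta q=i$. The only difference is bookkeeping — the paper truncates the $x^p$-series to $1+x^p+\cdots+x^{(q-1)p}$ so that at most one representation occurs automatically, whereas you keep both series infinite and supply the (correct) explicit argument that $r(n)\le 1$ for $n<pq$, a detail the paper leaves implicit.
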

\begin{proof} The lemma follows from the following equalities.
\begin{align*} 
\frac{\Phi_{pq}(x)-1}{x-1}
&= \frac{1}{1-x}-\frac{x^{pq}-1}{x^p-1}\cdot \frac{1}{1-x^q} \\ 
&= (1+x+\cdots) - (1+x^p+x^{2p}+\cdots+x^{pq-p}) \cdot (1+x^q+x^{2q}+\cdots) 
\end{align*}
\qed
\end{proof}

\begin{corollary}\label{cor:phi_pq_diff_coeff}
~
\begin{enumerate}[label=\textnormal{(\alph*)}]
    \item If $p$ divides $i$, the $i$th coefficient of $\frac{\Phi_{pq}(x)-1}{x-1}$ is $0$. 
    \item For $0 \leq i < q$, the $i$th coefficient of $\frac{\Phi_{pq}(x)-1}{x-1}$ is $0$ if and only if $p$ divides $i$. 
    \item For $i\geq \phi(pq)$, $\alpha\cdot p + \beta \cdot q = i$ has a non-negative integer solution $(\alpha,\beta)$.
    \item For $0\leq i \leq \phi(pq)-1$, one of the $i$th and $(\phi(pq)-i-1)$th coefficients of $\frac{\Phi_{pq}(x)-1}{x-1}$ is $0$ and the other is $1$.
    
\end{enumerate}
\end{corollary}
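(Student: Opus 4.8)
The plan is to read off all four parts from Lemma~\ref{lem:phi_pq_diff_coeff}. Write $g(x) = \frac{\Phi_{pq}(x)-1}{x-1} = \sum_i b_i x^i$ and let $S = \{\alpha p + \beta q : \alpha,\beta \in \Z_{\ge 0}\}$ be the numerical semigroup generated by $p$ and $q$; Lemma~\ref{lem:phi_pq_diff_coeff} says precisely that, for $i \ge 0$, $b_i = 0 \iff i \in S$. Note also that $\deg g = \phi(pq) - 1$, since $\deg \Phi_{pq}(x) = \phi(pq)$ and $g$ is a polynomial by Lemma~\ref{lem:properties}(a).

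Parts (a)--(c) are then elementary. For (a): if $p \mid i$ then $i = (i/p)\cdot p + 0\cdot q \in S$. For (b): the forward implication is (a); conversely, if $0 \le i < q$ and $i = \alpha p + \beta q \in S$, then $\beta q \le i < q$ forces $\beta = 0$, so $p \mid i$. For (c): given $i \ge \phi(pq) = (p-1)(q-1)$, choose the unique $\beta_0 \in [0,p-1]$ with $\beta_0 q \equiv i \pmod p$ (possible since $(p,q)=1$); then $i - \beta_0 q$ is a multiple of $p$ with $i - \beta_0 q \ge (p-1)(q-1) - (p-1)q = 1-p > -p$, hence $i - \beta_0 q \ge 0$, and $i = \frac{i-\beta_0 q}{p}\cdot p + \beta_0 q \in S$. (In passing this reproves that $\phi(pq)-1$ is the Frobenius number of $\{p,q\}$.)

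For (d) --- which is the statement that $S$ is symmetric about $\phi(pq)-1$ --- I would avoid a purely combinatorial argument and instead exploit the symmetry of $\Phi_{pq}(x)$. Put $d = \phi(pq)$. Using $\mathsf{rev}(\Phi_{pq}(x)) = \Phi_{pq}(x)$ from Lemma~\ref{lem:properties}(b), i.e. $\Phi_{pq}(1/x) = x^{-d}\Phi_{pq}(x)$, a short calculation gives
\[
\mathsf{rev}(g(x)) \;=\; x^{d-1}g(1/x) \;=\; \frac{x^{d} - \Phi_{pq}(x)}{x-1},
\]
and therefore
\[
g(x) + \mathsf{rev}(g(x)) \;=\; \frac{x^{d}-1}{x-1} \;=\; 1 + x + \cdots + x^{d-1}.
\]
Since $\mathsf{rev}(g(x)) = \sum_i b_{d-1-i} x^i$, comparing coefficients yields $b_i + b_{d-1-i} = 1$ for all $0 \le i \le d-1$; as each $b_i \in \{0,1\}$ by Lemma~\ref{lem:phi_pq_diff_coeff}, exactly one of $b_i$, $b_{d-1-i}$ vanishes and the other equals $1$, which is (d).

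The only real obstacle is recognizing that (d) is a reverse-polynomial identity in disguise; once the clean form $g(x) + \mathsf{rev}(g(x)) = (x^d-1)/(x-1)$ is obtained, the conclusion is immediate. The remaining care is bookkeeping: checking that $(x^d - \Phi_{pq}(x))/(x-1)$ is genuinely a polynomial (it is, since $\Phi_{pq}(1)=1$ by Lemma~\ref{lem:properties}(a)) and that the degrees line up in the $\mathsf{rev}$ computation.
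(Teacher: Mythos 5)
Your proof is correct and follows essentially the same route as the paper: (a) and (b) are the same elementary observations about the semigroup generated by $p$ and $q$, and for (d) you derive the identical identity $g(x)+\mathsf{rev}(g(x))=(x^{\phi(pq)}-1)/(x-1)$ from the symmetry of $\Phi_{pq}(x)$ and compare coefficients. The only divergence is in (c), where the paper simply observes that $b_i=0$ for $i>\deg g=\phi(pq)-1$ and invokes Lemma~\ref{lem:phi_pq_diff_coeff}, whereas you give a direct Frobenius-number computation; both are valid, the paper's being a one-liner and yours being self-contained.
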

\begin{proof}
~
\begin{enumerate}[label=\textnormal{(\alph*)}]
    \item The equation $\alpha\cdot p + \beta \cdot q = t \cdot p$ has a non-negative integer solution $(t,0)$. 
    \item If $\beta$ is positive, $\alpha\cdot p + \beta \cdot q \geq q$ holds. Therefore, for $0 \leq i < q$, the equation $\alpha\cdot p + \beta \cdot q = i$ has a non-negative integer solution $(\alpha,\beta)$ if and only if $p$ divides $i$.
    \item This follows from the fact that $\deg(\frac{\Phi_{pq}(x)-1}{x-1})=\phi(pq)-1$.
    \item From Lemma~\ref{lem:properties} (b), the following equalities hold. Then, recall Lemma~\ref{lem:phi_pq_diff_coeff}.
    \begin{align*} 
    \frac{\Phi_{pq}(x)-1}{x-1} + \mathsf{rev}\left(\frac{\Phi_{pq}(x)-1}{x-1}\right)
    &= \frac{\Phi_{pq}(x)-1}{x-1} + x^{\phi(pq)-1}\cdot \left(\frac{\Phi_{pq}(1/x)-1}{1/x-1}\right) \\ 
    &= \frac{\Phi_{pq}(x)-1}{x-1} + \frac{\Phi_{pq}(x)-x^{\phi(pq)}}{1-x} \\
    &= \frac{x^{\phi(pq)}-1}{x-1}
    \end{align*}
\end{enumerate}
\qed
\end{proof}

\section{Scaled Inverse of $(x^i-x^j)$ modulo $\Phi_{p^s}(x)$}

In this section, we prove Theorem \ref{thm:ps} regarding the scaled inverse of $(x^i-x^j)$ modulo $\Phi_{p^s}(x)$. Beforehand, we define the scaled inverse, and check its basic properties.

\subsection{Scaled Inverse}

\begin{definition}[Scaled Inverse] \label{def:scaled_inv}
Let $f(x)$ be an irreducible polynomial over $\Z$.
The scaled inverse of a nonzero element $a(x)\in \Z[x]/f(x)$ is the element $b(x)\in \Z[x]/f(x)$ such that $a(x)b(x)=c \pmod{f(x)}$ for the smallest possible positive integer $c$. We say $b(x)$ is the scaled inverse of $a(x)$ modulo $f(x)$ with scale $c$.  
\end{definition}

\begin{remark}[Existence]\label{rem:scaled_inv_exist}
Let $\check{a}(x)\in \Z[x]$ be the representative of $a(x)$ where $\deg(\check{a})<\deg(f)$.
Note that $(\check{a}(x), f(x))=1$, since $f(x)$ is irreducible. Thus, the resultant $r:=\mathsf{res}(\check{a}(x), f(x))$ is a nonzero integer. There exist Bezout coefficients $s(x), t(x)\in \Z[x]$ such that $s(x)\check{a}(x)+t(x)f(x)=r$, $\deg(s)<\deg(f)$, and $\deg(t)<\deg(\check{a})$. 
Thus, there exists a scaled inverse with scale not greater than $r$.
\end{remark}

\begin{remark}[Uniqueness]
Note the uniqueness of Bezout coefficients $\tilde{s}(x), \tilde{t}(x)\in \mathbb{Q}[x]$ such that $\tilde{s}(x)\check{a}(x)+\tilde{t}(x)f(x)=1$, $\deg(\tilde{s})<\deg(f)$, and $\deg(\tilde{t})<\deg(\check{a})$. 
Followingly, $\left(c \tilde{s}(x) \bmod{f(x)}\right)$ is the unique scaled inverse. 
\end{remark}

\begin{remark}[Formulation]\label{rem:scaled_inv_formula}
Let $\mathsf{cont}(s)$ be the positive content of $s(x)$. Let $d$ be $(r, \mathsf{cont}(s))$. Then, it is easy to see that $b(x):=s(x)/d \bmod{f(x)}$ is the scaled inverse with scale $c:=r/d$.
\end{remark}

\subsection{Proof of Theorem~\ref{thm:ps}}

\begin{theorem}\label{thm:ps}
Let $p$ be a prime and $M=p^s$ be a prime power. For any $i,j \in \Z$ satisfying $0\leq j<i<M$, there exists $u(x) \in \Z[x]/\Phi_{M}(x)$ such that \begin{itemize}[label=$\bullet$]
        \item $(x^i - x^j) \cdot u(x) = p \pmod{\Phi_{M}(x)}$
        \item and $||u(x)||_{\infty} \le p-1$.
    \end{itemize}
\end{theorem}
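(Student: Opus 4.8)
The plan is to reduce everything to a single factorization identity modulo $\Phi_{p^s}(x)$. First I would observe that $x^i - x^j = x^j(x^{i-j}-1)$, and that $x^j$ is a unit in $\Z[x]/\Phi_{p^s}(x)$ with inverse $x^{p^s - j}$ (since $x^{p^s}=1$ in this quotient, as $\Phi_{p^s}(x)\mid x^{p^s}-1$). So it suffices to find the scaled inverse of $x^k - 1$ with scale $p$ and coefficient bound $p-1$, where $k := i-j$ satisfies $1 \le k < p^s$; multiplying the result by $x^{p^s-j}$ only permutes and sign-flips coefficients modulo $\Phi_{p^s}(x)$, so the norm bound is preserved (I should check that reducing $x^m\cdot u(x)$ modulo $\Phi_{p^s}(x)=\sum_{\ell=0}^{p-1}x^{\ell p^{s-1}}$ does not inflate coefficients; this uses the sparse, $\{0,1\}$-coefficient structure of $\Phi_{p^s}$).

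Next, writing $k = p^a \cdot k'$ with $(k',p)=1$ and $0 \le a \le s-1$, I would use that the map $x \mapsto x^{k'}$ induces a ring automorphism of $\Z[x]/\Phi_{p^s}(x)$ that fixes $p$ and permutes monomials (it sends a primitive $p^s$-th root of unity to another one), hence it carries a scaled inverse with scale $p$ and norm bound $p-1$ for $x^{p^a}-1$ to one for $x^{k}-1$, again without changing the norm. So the crux is: for each $a$ with $0\le a\le s-1$, exhibit $u(x)\in\Z[x]/\Phi_{p^s}(x)$ with $(x^{p^a}-1)\,u(x) = p \pmod{\Phi_{p^s}(x)}$ and $\|u\|_\infty \le p-1$. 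The key computation is the telescoping/geometric identity
\begin{equation*}
(x^{p^a}-1)\cdot\Bigl(\sum_{\ell=1}^{p-1} \ell\, x^{(p-1-\ell)\,p^a}\Bigr) = \sum_{\ell=0}^{p-1} x^{\ell p^a} - p,
\end{equation*}
which one checks by expanding the left side and collecting terms. For $a = s-1$ the right-hand sum is exactly $\Phi_{p^s}(x)$, so the bracketed polynomial is already the desired $u(x)$ with $\|u\|_\infty = p-1$. For $a < s-1$, the sum $\sum_{\ell=0}^{p-1}x^{\ell p^a}$ equals $\Phi_{p^{a+1}}(x)$, which is $\frac{x^{p^{a+1}}-1}{x^{p^a}-1}$ and divides $x^{p^s}-1$; I would then multiply through by the cofactor $\frac{x^{p^s}-1}{x^{p^{a+1}}-1} = \sum_{m=0}^{p^{s-a-1}-1} x^{m p^{a+1}}$ (a $\{0,1\}$-polynomial), obtaining $(x^{p^a}-1)\cdot w(x) = (x^{p^s}-1) - p\cdot\frac{x^{p^s}-1}{x^{p^{a+1}}-1} \equiv -p\cdot\bigl(\text{something}\bigr) \pmod{\Phi_{p^s}(x)}$, and track the resulting coefficients after reduction modulo $\Phi_{p^s}(x)$.

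The main obstacle I anticipate is exactly this last bookkeeping step: controlling $\|u\|_\infty$ after the reduction modulo $\Phi_{p^s}(x)$ when $a<s-1$. Multiplying the length-$(p-1)$ coefficient vector $(1,2,\dots,p-1)$ by the sparse cofactor spreads it into disjoint blocks (the exponents $(p-1-\ell)p^a + m p^{a+1}$ are all distinct and lie in $[0,p^s-1)$), so before reduction the coefficients are still bounded by $p-1$; the real question is whether reducing modulo $\Phi_{p^s}(x) = \sum_{\ell=0}^{p-1} x^{\ell p^{s-1}}$ — which only identifies $x^{d}$ with $-\sum_{\ell=1}^{p-1} x^{d - \ell p^{s-1} \bmod p^s}$ for $d\ge \phi(p^s)$ — can cause cancellation-free addition of several $\le p-1$ terms. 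I would argue it cannot, by showing the nonzero coefficients of $w(x)$ occupy residues that are pairwise non-congruent modulo the reduction pattern, so that the reduction acts as a signed permutation on the support; alternatively, one can sidestep the issue entirely by choosing the representative of the scaled inverse cleverly (e.g. using $x^{-p^a} = x^{p^s - p^a}$ to rewrite $u$ with all exponents already in $[0,\phi(p^s))$ before any reduction is needed). Finally, I would note minimality of the scale: since $p \mid \Phi_{p^s}(1)$ is false — rather, $\Phi_{p^s}(1)=p$ — evaluating any relation $(x^i-x^j)u(x)=c$ at a primitive root shows $c$ is a multiple of the norm of $x^i-x^j$, and a short argument (or direct appeal to $\Phi_{p^s}(1)=p$ together with the fact that $x^i-x^j$ is not a unit) gives that $c=p$ is the smallest positive scale, so the theorem's scale $p$ is optimal.
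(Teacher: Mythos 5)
Your proposal has three genuine gaps, two of which are fatal to the architecture of the argument. First, the claim that multiplying by the unit $x^{p^s-j}$ ``only permutes and sign-flips coefficients modulo $\Phi_{p^s}(x)$'' is false for odd $p$: since $x^{d}\equiv -\sum_{\ell=0}^{p-2}x^{d-( \ell+1)p^{s-1}}$ for $d\geq\phi(p^s)$, a single monomial spreads into $p-1$ positions and distinct monomials can collide there. Concretely, for $p=3$, $s=1$, take $u(x)=1-x$; then $x\cdot u(x)=x-x^2\equiv 1+2x \pmod{\Phi_3(x)}$, so the norm doubles. (The paper shows the expansion factor of $x^k$ modulo $\Phi_{p^s}(x)$ is exactly $2$, not $1$.) The same counterexample, with $\sigma:x\mapsto x^2$ sending $1-x$ to $1-x^2\equiv 2+x$, kills your second reduction: the automorphism $x\mapsto x^{k'}$ does \emph{not} preserve $||\cdot||_\infty$ on reduced representatives. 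So you cannot first solve a normalized problem and then transport the solution by ``norm-preserving'' maps; each such map can double the norm.

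Second, and more fundamentally, your construction for $a<s-1$ produces the zero element. The cofactor you multiply by is $C(x)=\frac{x^{p^s}-1}{x^{p^{a+1}}-1}=\prod_{e=a+2}^{s}\Phi_{p^e}(x)$, which contains $\Phi_{p^s}(x)$ as a factor whenever $a\leq s-2$; hence $w(x)=u_0(x)C(x)\equiv 0\pmod{\Phi_{p^s}(x)}$ and the identity $(x^{p^a}-1)w(x)\equiv -pC(x)\equiv 0$ is vacuous. (Check $p=2$, $s=2$, $a=0$: $w(x)=1+x^2=\Phi_4(x)$.) Even setting that aside, $-pC(x)$ is not the constant $p$, so you would not have a scaled inverse. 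The correct lifting goes the other way: start from your $a=s-1$ identity in the variable $y=x^{p^{s-1}\beta}$ (where $\beta=(i-j)/p^{\alpha}$ is the prime-to-$p$ part), so that the right-hand side is $\Phi_{p^s}(x^{\beta})-p\equiv -p$, and multiply by the cofactor $\frac{x^{p^{s-1}\beta}-1}{x^{p^{\alpha}\beta}-1}$, which is \emph{not} divisible by $\Phi_{p^s}(x)$. This yields the single unreduced polynomial $\tilde u(x)=-x^{M-j}\cdot\frac{\Phi_{M}(x^{\beta})-p}{x^{p^{\alpha}\beta}-1}$, whose coefficients all lie in $[1-p,0]$ and whose monomials have distinct degrees modulo $x^{M}-1$; the point of the paper's proof is that one then reduces this one polynomial modulo $x^M-1$ first (no collisions, coefficients stay in $[1-p,0]$) and modulo $\Phi_M(x)$ second (each coefficient becomes a difference $\tilde u_\ell-\tilde u_m$ of two numbers in $[1-p,0]$, hence bounded by $p-1$). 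Your plan of reducing early and composing unit multiplications and automorphisms afterwards is exactly what destroys this control.
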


Theorem~\ref{thm:ps} says the coefficient size of the scaled inverse of $(x^i-x^j)$ is bounded by $p-1$ with the scale $p$ modulo $\Phi_{p^s}(x)$. 
Regarding Remark~\ref{rem:scaled_inv_formula}, $u(x)$ is indeed the scaled inverse: coefficients of $u(x)$ is already smaller than $p$, which is the only non-identity factor of $p$.

For readers' comprehension, we first review the proof of $s=1$ case which was previously presented in \cite{CKL21}. The full proof of Theorem~\ref{thm:ps} is a straightforward generalization of the $s=1$ case. However, the full proof requires unpleasant notations and computations. Readers might want to first read the $s=1$ case and catch the outline of the full proof.

\begin{proof}[\cite{CKL21} $s=1$ Case]
Consider the following polynomial in $\Z[x]$.
$$v(x):=\frac{\Phi_p(x)-p}{x-1}=\sum_{k=0}^{p-1}(p-1-k)\cdot x^k$$
We claim that $\tilde{u}(x):=-x^{p-j}\cdot v(x^{i-j}) \in \Z[x]$ satisfies the conditions after being reduced by $\Phi_{p}(x)$. By definition, the first condition can be easily checked with the fact that $\Phi_{p}(x)$ divides $\Phi_{p}(x^{i-j})$ since $(p,i-j)=1$. 

Since $p$ does not divide $i-j$, when reduced modulo $x^{p}-1$, each monomials of $\tilde{u}(x)$ are reduced to distinct-degree monomials with coefficients remaining in the interval $[1-p,0]$. Let us denote the $\ell$th coefficient of $(\tilde{u}(x) \bmod{x^{p}-1})$ as $\tilde{u}_\ell\in[1-p,0]$. 
Applying modulo $\Phi_{p}(x)$ to $(\tilde{u}(x) \bmod{x^{p}-1})$, the $\ell$th coefficients of $(\tilde{u}(x) \bmod{\Phi_{p}(x)})$ equals $\tilde{u}_\ell-\tilde{u}_{(p-1)}$.
Certainly, $\tilde{u}_\ell-\tilde{u}_{(p-1)}$ lies in the interval of $[1-p,p-1]$. 
Thus, the inequality $||\tilde{u}(x) \bmod{\Phi_{p}(x)}||_\infty \leq p-1$ holds. \qed
\end{proof}

Now we give the actual proof of Theorem~\ref{thm:ps} for arbitrary $s$.

\begin{proof}[Theorem~\ref{thm:ps}]
Let $p^\alpha$ be the largest power of $p$ dividing $i-j$, and let $\beta:=(i-j)/p^\alpha$. Let us denote $M'=p^{s-1}$.
Consider the following polynomial $v(x) \in \Z[x]$.
\begin{align*} 
v(x) &:= \frac{\Phi_{M}(x^\beta)-p}{x^{p^{\alpha}\beta}-1} \\ 
&=  \frac{\Phi_{p}(x^{M'\beta})-p}{x^{M'\beta}-1} \cdot \frac{x^{M'\beta}-1}{x^{p^{\alpha}\beta}-1}\\
&= \sum^{p-1}_{k=0} (p-1-k) \left[ x^{(M' k) \beta} + x^{(M' k + p^{\alpha}) \beta} + \cdots +  x^{(M' k + M' - p^{\alpha}) \beta} \right]
\end{align*}
We claim that $\tilde{u}(x)=-x^{M-j}\cdot v(x) \in \Z[x]$ satisfies the conditions after being reduced by $\Phi_{M}(x)$.
By definition, the first condition can be easily checked with the fact that $\Phi_{M}(x)$ divides $\Phi_{M}(x^\beta)$ since $(M,\beta)=1$. 

For the second condition, first observe that the degrees of monomials with nonzero coefficients in $\tilde{u}(x)$ are same modulo $p^\alpha \beta$.
Moreover, the coefficients of $\tilde{u}(x)$ are in the interval of $[1-p, 0]$.
Since $(M,\beta)=1$, when reduced modulo $x^{M}-1$, each monomials of $\tilde{u}(x)$ are reduced to distinct-degree monomials (degrees being same modulo $p^\alpha$) with coefficients remaining in the interval of $[1-p,0]$. Let us denote the $\ell$th coefficient of $(\tilde{u}(x) \bmod{x^{M}-1})$ as $\tilde{u}_\ell\in[1-p,0]$. 
Applying modulo $\Phi_{M}(x)$ to $(\tilde{u}(x) \bmod{x^{M}-1})$, the $\ell$th coefficients of $(\tilde{u}(x) \bmod{\Phi_{M}(x)})$ equals $\tilde{u}_\ell-\tilde{u}_m$, where $m$ is the largest integer which equals $\ell$ modulo $M'$ and less than $M$. 
Certainly, $\tilde{u}_\ell-\tilde{u}_m$ lies in the interval of $[1-p,p-1]$. 
Thus, the inequality $||\tilde{u}(x) \bmod{\Phi_{M}(x)}||_\infty \leq p-1$ holds. \qed
\end{proof}
We remark that Theorem~\ref{thm:ps} is tight: when $i=1$ and $j=0$, the $0$th coefficient of $u(x)$ is $p-1$ and followingly $||u(x)||_\infty = p-1$.

\section{Properties of $\{x^k\}_{k\in\Z}$ modulo $\Phi_{pq}(x)$} \label{sec:prop_pq}

In this section, we prove several properties of $\{x^k\}_{k\in\Z}$ in $\Z[x]/\Phi_{pq}(x)$. These results are not only the essence of the proof of Theorem \ref{thm:psqt} in Section~\ref{sec:scinverse}, but also could be of independent interest.

\subsection{Properties of $\{x^k\}_{k\in\Z}$ modulo $\Phi_{pq}(x)$}

\begin{lemma}\label{lem:exp_b_1}
The following equalities hold for $0 \leq k \leq p-1$.
\begin{align*}
x^{\phi(pq)+k}\bmod{\Phi_{pq}(x)} &= x^{\phi(pq)+k} - \Phi_{pq}(x) \cdot \sum^{k}_{i=0}x^i \\
||x^{\phi(pq)+k} \bmod{\Phi_{pq}(x)}||_\infty &= 1
\end{align*}
\end{lemma}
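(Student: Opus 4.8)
The plan is to verify directly that the polynomial $r(x):=x^{\phi(pq)+k}-\Phi_{pq}(x)\cdot\sum_{i=0}^{k}x^{i}\in\Z[x]$ does the job. Since $r(x)\equiv x^{\phi(pq)+k}\pmod{\Phi_{pq}(x)}$ by construction, it suffices to prove (i) $\deg r<\phi(pq)$, which yields the first displayed equality (as then $r$ is the fully reduced representative), and (ii) $\|r\|_\infty=1$, the second. Writing $\Phi_{pq}(x)=\sum_m c_m x^m$ with $c_m=0$ outside $0\le m\le\phi(pq)$ and $c_0=c_{\phi(pq)}=1$, the coefficient of $x^\ell$ in $r$ is $[\ell=\phi(pq)+k]-\sum_{m=\ell-k}^{\ell}c_m$, so the whole statement reduces to controlling sums of at most $k+1\le p$ consecutive coefficients of $\Phi_{pq}(x)$.

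For that I would use Lemma~\ref{lem:phi_pq_diff_coeff}: from $\Phi_{pq}(x)-1=(x-1)\sum_i b_i x^i$ and $b_0=0$ one reads off $c_0=1$ and $c_m=b_{m-1}-b_m$ for $m\ge 1$, which telescopes to $\sum_{m=a}^{b}c_m=b_{a-1}-b_b$ for $1\le a\le b$ and $\sum_{m=0}^{b}c_m=1-b_b$. Since every $b_i\in\{0,1\}$, this settles (ii) at once: for $0\le\ell\le\phi(pq)-1$ the $x^\ell$-coefficient of $r$ is $-\sum_{m=\ell-k}^{\ell}c_m$, equal to $-(1-b_\ell)$ if $\ell\le k$ and to $-(b_{\ell-k-1}-b_\ell)$ if $\ell>k$, hence in $\{-1,0,1\}$; and the $x^0$-coefficient is $-c_0=-1$, so $r\ne 0$ and $\|r\|_\infty=1$.

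For (i) I need the coefficients of $x^{\phi(pq)},\dots,x^{\phi(pq)+k}$ in $r$ to vanish. The top one is $1-c_{\phi(pq)}=0$. For $\phi(pq)\le\ell\le\phi(pq)+k-1$, the bound $\ell-k\ge\phi(pq)-(p-1)=(p-1)(q-2)\ge 1$ (this is the only place $p<q$, forcing $q\ge 3$, enters) lets me telescope $\sum_{m=\ell-k}^{\ell}c_m=\sum_{m=\ell-k}^{\phi(pq)}c_m=b_{\ell-k-1}$; writing $\ell-k-1=\phi(pq)-1-j$ with $j=\phi(pq)+k-\ell\in[1,k]\subseteq[1,p-1]$, Corollary~\ref{cor:phi_pq_diff_coeff}(b) gives $b_j=1$ (as $1\le j\le p-1<q$ and $p\nmid j$), so Corollary~\ref{cor:phi_pq_diff_coeff}(d) forces $b_{\phi(pq)-1-j}=0$. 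Hence $\deg r<\phi(pq)$, finishing the proof.

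The only friction I anticipate is the index bookkeeping across these ranges of $\ell$ — in particular making sure the length-$(k+1)$ window $[\ell-k,\ell]$ is treated correctly when it over-runs the support $[0,\phi(pq)]$ of the $c_m$, and matching up the $b$-indices that appear with the cases of Corollary~\ref{cor:phi_pq_diff_coeff}. Conceptually nothing is needed beyond the telescoping identity and the symmetry of $\Phi_{pq}(x)$ that Corollary~\ref{cor:phi_pq_diff_coeff}(d) packages.
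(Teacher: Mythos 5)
Your proof is correct and is essentially the paper's own argument in different clothing: the paper characterizes the coefficients $d_i$ of $\Phi_{pq}(x)\cdot\sum_{i=0}^k x^i$ via solvability of $\alpha p+\beta q=i$ and $\alpha p+\beta q=i-(k+1)$, which by Lemma~\ref{lem:phi_pq_diff_coeff} is exactly your telescoped identity $d_i=b_{i-k-1}-b_i$, and it kills the top coefficients using the same Corollary~\ref{cor:phi_pq_diff_coeff}(b),(d) step you use. Both the norm bound and the degree drop thus rest on identical ingredients, so this is the same proof up to presentation.
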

\begin{proof}
Let us denote the $j$th coefficient of $\Phi_{pq}(x) \cdot \sum^{k}_{i=0}x^i$ as $d_j$, i.e. $\Phi_{pq}(x) \cdot \sum^{k}_{i=0}x^i=\sum_j d_j\cdot x^j$. Consider the following representation. 
\begin{align*} 
\Phi_{pq}(x) \cdot \sum^{k}_{i=0}x^i
&= \frac{x^{pq}-1}{(x-1)\cdot \Phi_p(x) \cdot \Phi_q(x)}\cdot \frac{x^{k+1}-1}{x-1} \\ 
&= \frac{x^{pq}-1}{x^p-1}\cdot \frac{1-x^{k+1}}{1-x^q} \\
&= (1+x^p+x^{2p}+\cdots+x^{(q-1)p})\cdot(1-x^{k+1})\cdot(1+x^q+x^{2q}+\cdots) 
\end{align*}
Now we can interpret $d_i$'s combinatorially. That is, for Diophantine equations 
\begin{align}
    \alpha p + \beta q &= i \\
    \alpha p + \beta q &= i - (k+1),
\end{align}
$$d_i=
\begin{cases}
1 &\text{if (1) has a non-negative integer solution } (\alpha,\beta) \text{ but (2) does not.}\\
-1 &\text{if (2) has a non-negative integer solution } (\alpha,\beta) \text{ but (1) does not.}\\
0 &\text{otherwise.}
\end{cases}$$
Therefore, we proved that $||x^{\phi(pq)+k} - \Phi_{pq}(x) \cdot \sum^{k}_{i=0}x^i||_\infty = 1$.

Equation (1) has a non-negative integer solution for $\phi(pq) \leq i \leq \phi(pq)+k$ (Corollary \ref{cor:phi_pq_diff_coeff} (c)). On the other hand, the equation (2) has non-negative integer solutions for any $\phi(pq) \leq i < \phi(pq)+k$ (Lemma \ref{lem:phi_pq_diff_coeff}, Corollary \ref{cor:phi_pq_diff_coeff} (b),(d)). Furthermore, it is easy to see that equation (2) has no solution for $i=\phi(pq)+k$, since $\deg(\frac{\Phi_{pq}(x)-1}{x-1})=\phi(pq)-1$ (Lemma \ref{lem:phi_pq_diff_coeff}). Together with the combinatorial characterization of $d_i$, $d_i=0$ holds for $\phi(pq) \leq i < \phi(pq)+k$ and $d_i=1$ holds for $i=\phi(pq)+k$. Then, the lemma follows.
\qed
\end{proof}

\begin{corollary}\label{cor:lowest_coeff_is}
For $0\leq k <p-1$, the $0$th coefficient of $(x^{\phi(pq)+k}\bmod{\Phi_{pq}(x)})$ equals $-1$.
\end{corollary}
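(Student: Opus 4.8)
The goal is to show that the $0$th coefficient of $\left(x^{\phi(pq)+k}\bmod{\Phi_{pq}(x)}\right)$ equals $-1$ for $0\le k<p-1$. The plan is to read off this coefficient directly from the explicit formula provided by Lemma~\ref{lem:exp_b_1}, namely
\[
x^{\phi(pq)+k}\bmod{\Phi_{pq}(x)} = x^{\phi(pq)+k} - \Phi_{pq}(x)\cdot\sum_{i=0}^{k}x^i.
\]
Since $k < p-1 \le \phi(pq)+k$ (indeed $\phi(pq)=(p-1)(q-1)\ge p-1 > k$), the monomial $x^{\phi(pq)+k}$ contributes nothing to the constant term, so the $0$th coefficient of the right-hand side is exactly $-d_0$, where $d_0$ is the constant term of $\Phi_{pq}(x)\cdot\sum_{i=0}^{k}x^i$.

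The key step is therefore to compute $d_0$. I would invoke the combinatorial characterization of $d_i$ established in the proof of Lemma~\ref{lem:exp_b_1}: $d_0 = 1$ if the equation $\alpha p + \beta q = 0$ has a non-negative integer solution while $\alpha p + \beta q = -(k+1)$ does not. The first equation is solved by $(\alpha,\beta)=(0,0)$, and the second has no non-negative solution since $-(k+1)<0$. Hence $d_0 = 1$, and the $0$th coefficient of $x^{\phi(pq)+k}\bmod{\Phi_{pq}(x)}$ equals $-d_0 = -1$.

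Alternatively, and perhaps more transparently, one can observe that $\Phi_{pq}(0)=1$ (its constant term is $1$, as $\Phi_{pq}$ is monic of degree $\phi(pq)$ with nonzero constant term, or simply because $\Phi_{pq}(0)=\pm 1$ and the product $\prod_{d\mid pq}\Phi_d(0)$ telescopes correctly) and $\left(\sum_{i=0}^{k}x^i\right)\big|_{x=0}=1$, so the constant term of the product is $1\cdot 1 = 1$. Either route gives $d_0=1$ immediately. I do not anticipate a genuine obstacle here; the only mild care needed is the bookkeeping that $x^{\phi(pq)+k}$ does not interfere with the constant coefficient, which holds because $\phi(pq)+k \ge \phi(pq) \ge p-1 > k \ge 0 > $ (nothing), i.e. $\phi(pq)+k>0$. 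The corollary then follows at once.
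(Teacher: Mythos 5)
Your proof is correct and follows essentially the same route as the paper: apply the explicit formula of Lemma~\ref{lem:exp_b_1} and read off the constant term as $-\Phi_{pq}(0)\cdot 1=-1$ (the paper justifies $\Phi_{pq}(0)=1$ via the symmetry in Lemma~\ref{lem:properties}~(b), while you give equivalent direct arguments). No issues.
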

\begin{proof}
The corollary follows from Lemma \ref{lem:exp_b_1} and the fact that $\Phi_{pq}(0)=1$ by Lemma \ref{lem:properties} (b). \qed
\end{proof}

\begin{corollary}\label{cor:highest_coeff_is_one}
For $0\leq k <p-1$, the $(\phi(pq)-1)$th coefficient of $(x^{\phi(pq)+k}\bmod{\Phi_{pq}(x)})$ is $1$.
\end{corollary}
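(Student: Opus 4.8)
The plan is to mirror the argument used for Corollary~\ref{cor:lowest_coeff_is}, but working from the top-degree end of the reduced polynomial rather than the bottom. By Lemma~\ref{lem:exp_b_1}, for $0 \le k \le p-1$ we have the explicit identity
$$x^{\phi(pq)+k}\bmod{\Phi_{pq}(x)} = x^{\phi(pq)+k} - \Phi_{pq}(x) \cdot \sum_{i=0}^{k}x^i,$$
and the right-hand side has degree at most $\phi(pq)-1$ (since the subtraction exactly cancels the top term when $k < p$, as was used in the proof of the lemma). So I would read off the coefficient of $x^{\phi(pq)-1}$ in this expression. Since $0 \le k < p-1$ forces $k+1 \le p-1 < \phi(pq)$, the monomial $x^{\phi(pq)+k}$ does not contribute to degree $\phi(pq)-1$, and the $(\phi(pq)-1)$th coefficient of the reduced polynomial equals $-d_{\phi(pq)-1}$, where $d_j$ is the $j$th coefficient of $\Phi_{pq}(x)\cdot\sum_{i=0}^{k}x^i$ as in the proof of Lemma~\ref{lem:exp_b_1}.

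Next I would evaluate $d_{\phi(pq)-1}$ using the combinatorial characterization established in that proof: $d_i = 1$ if equation~(1) $\alpha p + \beta q = i$ has a non-negative integer solution but equation~(2) $\alpha p + \beta q = i-(k+1)$ does not, $d_i = -1$ in the reverse situation, and $0$ otherwise. For $i = \phi(pq)-1$: equation~(2) reads $\alpha p + \beta q = \phi(pq)-1-(k+1) = \phi(pq)-k-2$. Since $0 \le k < p-1$, the target $\phi(pq)-k-2$ lies in the range $[\phi(pq)-p, \phi(pq)-2]$, and by Corollary~\ref{cor:phi_pq_diff_coeff}(d) applied to $\frac{\Phi_{pq}(x)-1}{x-1}$ (whose $i$th coefficient $b_i$ is $0$ exactly when equation~(1)-type equation is solvable), the value $b_{\phi(pq)-k-2}$ is paired with $b_{k+1}$; by Corollary~\ref{cor:phi_pq_diff_coeff}(b), since $0 < k+1 \le p-1 < q$ and $p \nmid (k+1)$, we get $b_{k+1}=1$, hence $b_{\phi(pq)-k-2}=0$, i.e. equation~(2) \emph{is} solvable. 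Meanwhile equation~(1) with $i = \phi(pq)-1$: this is $b_{\phi(pq)-1}$, paired with $b_0 = 0$ (since $p \mid 0$, by Corollary~\ref{cor:phi_pq_diff_coeff}(a)), so $b_{\phi(pq)-1} = 1$, meaning equation~(1) is \emph{not} solvable. Therefore $d_{\phi(pq)-1} = -1$, and the $(\phi(pq)-1)$th coefficient of $(x^{\phi(pq)+k}\bmod{\Phi_{pq}(x)})$ is $-d_{\phi(pq)-1} = 1$.

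The main obstacle, and the step I would double-check most carefully, is pinning down the exact ranges so that Corollary~\ref{cor:phi_pq_diff_coeff}(b),(d) genuinely apply — in particular verifying that $k+1$ and $\phi(pq)-k-2$ are both legitimate indices in $[0,\phi(pq)-1]$ and that the ``$i < q$'' hypothesis of part~(b) is met for $k+1$ (which it is, since $k+1 \le p-1 < q$). An alternative, slicker route that avoids some of this bookkeeping is to invoke symmetry directly: one could try to show that $x^{\phi(pq)+k}\bmod{\Phi_{pq}(x)}$ has a reverse-polynomial relationship with $x^{\phi(pq)+k}\bmod{\Phi_{pq}(x)}$ itself (using $\mathsf{rev}(\Phi_{pq}(x))=\Phi_{pq}(x)$ from Lemma~\ref{lem:properties}(b)), thereby deriving the top coefficient from the bottom coefficient already computed in Corollary~\ref{cor:lowest_coeff_is}; but the degree of $x^{\phi(pq)+k}$ is $\phi(pq)+k$, not a clean multiple, so this symmetry argument needs care and the direct coefficient computation above is the safer plan.
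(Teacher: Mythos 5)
Your proof is correct and takes essentially the same route as the paper: both reduce the claim, via Lemma~\ref{lem:exp_b_1}, to computing the $(\phi(pq)-1)$th coefficient of $\Phi_{pq}(x)\cdot\sum_{i=0}^{k}x^i$, which equals $b_{\phi(pq)-k-2}-b_{\phi(pq)-1}=0-1=-1$ by Corollary~\ref{cor:phi_pq_diff_coeff}(b),(d). The range conditions you flag ($1\le k+1\le p-1<q$, $p\nmid(k+1)$, and both indices lying in $[0,\phi(pq)-1]$) are exactly what is needed and they do hold, so your computation is complete.
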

\begin{proof}
Considering the following equalities, the corollary follows from Lemma \ref{lem:exp_b_1} and Corollary \ref{cor:phi_pq_diff_coeff} (b),(d).
\begin{align*} 
\Phi_{pq}(x) \cdot \sum^{k}_{i=0}x^i
&= \left( \frac{\Phi_{pq}(x)-1}{x-1} \cdot (x-1) +1 \right)\cdot \sum^{k}_{i=0}x^i \\ 
&=  \frac{\Phi_{pq}(x)-1}{x-1} \cdot (x^{k+1}-1) +  \sum^{k}_{i=0}x^i 
\end{align*}
\qed
\end{proof}

\begin{lemma}\label{lem:exp_b_2}
The following equality holds for $p-1\leq k \leq q-1$.
$$x^{\phi(pq)+k}\bmod{\Phi_{pq}(x)} = -x^{k-(p-1)}\sum^{p-2}_{i=0}x^{q\cdot i}$$
\end{lemma}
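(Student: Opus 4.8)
The plan is to establish the claimed closed form for $x^{\phi(pq)+k}\bmod{\Phi_{pq}(x)}$ in the range $p-1\leq k\leq q-1$ by exhibiting an explicit multiple of $\Phi_{pq}(x)$ whose subtraction from $x^{\phi(pq)+k}$ yields exactly $-x^{k-(p-1)}\sum_{i=0}^{p-2}x^{qi}$. Concretely, I would guess that
\[
x^{\phi(pq)+k} + x^{k-(p-1)}\sum_{i=0}^{p-2}x^{qi} = \Phi_{pq}(x)\cdot h(x)
\]
for some $h(x)\in\Z[x]$, and then identify $h(x)$. Since $\phi(pq)=(p-1)(q-1)$ and the right-hand side of the claimed identity has degree $k-(p-1)+q(p-2) = k + (p-1)(q-1) - (2q-1) \le \phi(pq)-q < \phi(pq)$, while the left-hand term has degree $\phi(pq)+k$ with $k\le q-1$, the quotient $h(x)$ should have degree $k-1 < q-1 \le \phi(pq)$, so verifying that $h(x)$ has integer coefficients and the right degree would already show the reduction is complete (degree $<\phi(pq)$) and hence is \emph{the} reduced form.

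The key computational step is to factor cleanly. Recall $\Phi_{pq}(x) = \frac{(x^{pq}-1)(x-1)}{(x^p-1)(x^q-1)}$. I would write $x^{\phi(pq)+k} = x^{pq - p - q + 1 + k}$ and try to massage
\[
x^{pq-p-q+1+k} + x^{k-p+1}\cdot\frac{x^{q(p-1)}-1}{x^q-1}
= x^{k-p+1}\left(x^{pq-q} + \frac{x^{q(p-1)}-1}{x^q-1}\right)
= x^{k-p+1}\cdot\frac{x^{pq-q}(x^q-1) + x^{q(p-1)}-1}{x^q-1}.
\]
The numerator $x^{pq}-x^{pq-q}+x^{pq-q}-1 = x^{pq}-1$ collapses beautifully, so the whole left-hand side equals $x^{k-p+1}\cdot\frac{x^{pq}-1}{x^q-1}$. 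Then it remains to check that $\frac{x^{pq}-1}{x^q-1}$ is divisible by $\Phi_{pq}(x)$ with quotient $\frac{x-1}{x^p-1}\cdot x^{k-p+1}$ having the right form — i.e., that $x^{k-p+1}\cdot\frac{x-1}{x^p-1}$ is a polynomial. This is exactly where the hypothesis $p-1\le k$ enters: $k-p+1\ge 0$ ensures we have an honest monomial factor, but $\frac{x-1}{x^p-1}$ is \emph{not} a polynomial, so I must be more careful — the correct grouping is $\Phi_{pq}(x)\cdot x^{k-p+1}\cdot\Phi_p(x)^{-1}(x-1)$, and since $\frac{x^{pq}-1}{x^q-1} = \Phi_{pq}(x)\Phi_p(x) = \Phi_{pq}(x)\sum_{i=0}^{p-1}x^i$, we actually get left-hand side $= x^{k-p+1}\Phi_{pq}(x)\sum_{i=0}^{p-1}x^i$, so $h(x) = x^{k-p+1}\sum_{i=0}^{p-1}x^i$, which is a genuine integer polynomial.

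With $h(x)$ in hand, the last step is the degree bookkeeping: $\deg h = (k-p+1)+(p-1) = k \le q-1 < (p-1)(q-1) = \phi(pq)$ (using $p\ge 2$ forces $(p-1)(q-1)\ge q-1$, with equality only when $p=2$, in which case the range $p-1\le k\le q-1$ is $1\le k\le q-1$ and one checks $k<\phi(pq)=q-1$ fails only at $k=q-1$ — so I should double-check the boundary $k=q-1$, $p=2$ separately, where $\phi(pq)=q-1$ and the identity must be read as a genuine reduction; in fact for $p=2$ the claimed right-hand side is $-x^{k-1}$, degree $k-1\le q-2<q-1$, consistent). So the only subtlety to watch is the edge case analysis at $p=2$ and $k$ near $q-1$; otherwise the identity $x^{\phi(pq)+k} + x^{k-p+1}\sum_{i=0}^{p-2}x^{qi} = \Phi_{pq}(x)\cdot x^{k-p+1}\sum_{i=0}^{p-1}x^i$ is a one-line polynomial identity once the telescoping $x^{pq-q}(x^q-1)+x^{q(p-1)}-1 = x^{pq}-1$ is spotted, and it immediately gives both the reduced form and (since all coefficients of $-x^{k-p+1}\sum x^{qi}$ are $0$ or $-1$) the norm bound. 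The main obstacle is really just finding the right telescoping grouping; after that it is routine.
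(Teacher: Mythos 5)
Your proof is correct and is essentially the paper's argument: both rest on the identity $x^{pq-q}+\sum_{i=0}^{p-2}x^{qi}=\frac{x^{pq}-1}{x^q-1}=\Phi_{pq}(x)\,\Phi_p(x)$ (the paper obtains it via Lemma~\ref{lem:exp_b_1} at $k=p-1$ and then shifts by $x^{k-(p-1)}$, you write the shifted identity directly), together with the check that the remainder has degree below $\phi(pq)$. One harmless arithmetic slip: the degree of the right-hand side is $k-(p-1)+q(p-2)=k+\phi(pq)-q\le\phi(pq)-1$, not $k+\phi(pq)-(2q-1)$, but the conclusion that it is less than $\phi(pq)$ is unaffected.
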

\begin{proof}
The lemma directly follows from the following equalities. The first equality is from Lemma \ref{lem:exp_b_1}.
$$x^{\phi(pq)+p-1}\bmod{\Phi_{pq}(x)} = x^{pq-q}-\Phi_{pq}(x)\cdot \Phi_{p}(x)= x^{pq-q}-\frac{x^{pq}-1}{x^q-1}= -\sum^{p-2}_{i=0}x^{q\cdot i} $$ \qed
\end{proof}

\begin{lemma}\label{lem:exp_b_3}
The following equality holds for $0 \leq k < pq-\phi(pq)$.
$$\mathsf{rev}\left(x^{\phi(pq)+k}\bmod{\Phi_{pq}(x)}\right) = x^{pq-1-k}\bmod{\Phi_{pq}(x)}$$
\end{lemma}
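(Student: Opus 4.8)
The plan is to \emph{reverse} the Euclidean division identity of $x^{\phi(pq)+k}$ by $\Phi_{pq}(x)$ and then exploit the symmetry $\mathsf{rev}(\Phi_{pq}(x)) = \Phi_{pq}(x)$ (Lemma~\ref{lem:properties}(b)). Write $x^{\phi(pq)+k} = Q(x)\,\Phi_{pq}(x) + r(x)$ with $Q(x), r(x)\in\Z[x]$ and $\deg r < \phi(pq)$, so that $r(x) = x^{\phi(pq)+k}\bmod\Phi_{pq}(x)$ is exactly the polynomial appearing inside $\mathsf{rev}(\cdot)$ in the statement. Since $x^{\phi(pq)+k}$ and $\Phi_{pq}(x)$ are both monic and $\deg r<\phi(pq)$, the quotient $Q(x)$ is monic of degree exactly $k$.

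Next I would substitute $x\mapsto 1/x$ in this identity and multiply through by $x^{\phi(pq)+k}$. The left-hand side becomes $1$. On the right-hand side: $x^{k}Q(1/x) = \mathsf{rev}(Q)(x)$ because $\deg Q = k$; $x^{\phi(pq)}\Phi_{pq}(1/x) = \mathsf{rev}(\Phi_{pq})(x) = \Phi_{pq}(x)$ by symmetry; and $x^{\phi(pq)+k}r(1/x) = x^{k+1}\cdot\bigl(x^{\phi(pq)-1}r(1/x)\bigr)$. Setting $\rho(x):=x^{\phi(pq)-1}r(1/x)$ — which is just $r$ read backwards as a coefficient vector of length $\phi(pq)$, hence a polynomial in $\Z[x]$ of degree $\le \phi(pq)-1$ — this rearranges to $1 = \mathsf{rev}(Q)(x)\,\Phi_{pq}(x) + x^{k+1}\rho(x)$, i.e. $x^{k+1}\rho(x)\equiv 1 \pmod{\Phi_{pq}(x)}$.

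Finally, since $\Phi_{pq}(x)\mid x^{pq}-1$, $x$ is invertible modulo $\Phi_{pq}(x)$ with $x^{-1}\equiv x^{pq-1}$, so the last congruence gives $\rho(x)\equiv x^{-(k+1)}\equiv x^{pq-1-k}\pmod{\Phi_{pq}(x)}$; as $\deg\rho<\phi(pq)$, in fact $\rho(x) = x^{pq-1-k}\bmod\Phi_{pq}(x)$, which is the assertion. (When $0\le k<p-1$, Corollary~\ref{cor:highest_coeff_is_one} shows the $(\phi(pq)-1)$th coefficient of $r$ is nonzero, so $\mathsf{rev}(r)=\rho$ with no ambiguity; for larger $k$ one reads $\mathsf{rev}$ of a ring element as the reverse of its length-$\phi(pq)$ coefficient vector, which is precisely what the computation produces.)

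The one point that needs care is the degree bookkeeping under reversal: one must check that $Q$ has degree exactly $k$ — so that $x^{k}Q(1/x)$ really is $\mathsf{rev}(Q)$ and not a strict multiple of it by a power of $x$ — and that $r$ may legitimately have degree below $\phi(pq)-1$, which is exactly why the third term carries $x^{k+1}$ rather than $x^{k}$ and why $\rho$, not the bare $\mathsf{rev}(r)$, is the object compared to $x^{pq-1-k}\bmod\Phi_{pq}(x)$. A more pedestrian alternative would be to plug the closed forms of Lemmas~\ref{lem:exp_b_1} and~\ref{lem:exp_b_2} into the definition of $\mathsf{rev}$ and match monomials case by case, but the division-reversal argument is shorter and uniform in $k$.
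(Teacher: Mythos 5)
Your proof is correct and follows essentially the same route as the paper: both reverse the Euclidean division identity $x^{\phi(pq)+k}=Q(x)\,\Phi_{pq}(x)+r(x)$ via the substitution $x\mapsto 1/x$ together with the palindromic property $\mathsf{rev}(\Phi_{pq}(x))=\Phi_{pq}(x)$, and then use $x^{pq}\equiv 1\pmod{\Phi_{pq}(x)}$ to identify the reversed remainder with $x^{pq-1-k}\bmod{\Phi_{pq}(x)}$. Your explicit handling of the case $\deg r<\phi(pq)-1$ --- reading $\mathsf{rev}$ as reversal of the length-$\phi(pq)$ coefficient vector, i.e.\ $\rho(x)=x^{\phi(pq)-1}r(1/x)$ --- is a genuine point of care that the paper glosses over by silently computing $\mathsf{rev}(\cdot)$ as $x^{\phi(pq)-1}(\cdot)(1/x)$, and it is indeed the interpretation under which the lemma holds for all $k$ in the stated range.
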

\begin{proof}
Let $x^{\phi(pq)+k}\bmod{\Phi_{pq}(x)}=x^{\phi(pq)+k}-f(x)\cdot\Phi_{pq}(x)$. Note that $\deg(f)<pq-\phi(pq)$. By the symmetry of $\Phi_{pq}(x)$ (Lemma \ref{lem:properties} (b)), the following equalities hold.
\begin{align*} 
\mathsf{rev}\left(x^{\phi(pq)+k}\bmod{\Phi_{pq}(x)}\right) &=  \mathsf{rev}\left(x^{\phi(pq)+k}-f(x)\cdot\Phi_{pq}(x)\right) \\ 
&= x^{\phi(pq)-1}\left((1/x)^{\phi(pq)+k}-f(1/x)\cdot\Phi_{pq}(1/x)\right) \\
&= x^{-k-1}-\frac{f(1/x)}{x} \cdot \Phi_{pq}(x) \\
&= x^{pq} \left(x^{-k-1}-\frac{f(1/x)}{x} \cdot \Phi_{pq}(x)\right) \pmod{\Phi_{pq}(x)} \\
&= x^{pq-k-1}-\left(x^{pq-1} \cdot f(1/x) \right) \Phi_{pq}(x) \pmod{\Phi_{pq}(x)} \\
&= x^{pq-k-1} \pmod{\Phi_{pq}(x)}
\end{align*}\qed
\end{proof}

\begin{corollary}\label{cor:exp_norm}
For all integer $k$, $||x^k \bmod{\Phi_{pq}(x)}||_\infty = 1$ holds.
\end{corollary}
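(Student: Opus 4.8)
The goal is to show $\|x^k \bmod \Phi_{pq}(x)\|_\infty = 1$ for every integer $k$, so the plan is to reduce an arbitrary $k$ modulo the order $pq$ of $x$ in $\mathbb{Z}[x]/\Phi_{pq}(x)$ — wait, $x$ has order $pq$ only up to the relation, so first I would note $x^{pq}\equiv 1$ is false; rather, $\Phi_{pq}(x)$ divides $x^{pq}-1$, hence $x^{pq}\equiv 1 \pmod{\Phi_{pq}(x)}$ genuinely holds, and so it suffices to treat $0\le k \le pq-1$.

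\begin{proof}
Since $\Phi_{pq}(x)$ divides $x^{pq}-1$, we have $x^{pq}\equiv 1\pmod{\Phi_{pq}(x)}$, so it suffices to prove the claim for $0\le k\le pq-1$. For $0\le k\le \phi(pq)-1$, the monomial $x^k$ is already reduced, so $\|x^k\bmod{\Phi_{pq}(x)}\|_\infty=1$. For $\phi(pq)\le k\le pq-1$, write $k=\phi(pq)+k'$ with $0\le k'<pq-\phi(pq)=p+q-1$. If $0\le k'\le q-1$, then Lemma~\ref{lem:exp_b_1} (for $k'\le p-1$) and Lemma~\ref{lem:exp_b_2} (for $p-1\le k'\le q-1$) give $\|x^{\phi(pq)+k'}\bmod{\Phi_{pq}(x)}\|_\infty=1$ directly. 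The remaining case is $q\le k'\le p+q-2$. Here we invoke Lemma~\ref{lem:exp_b_3}: since $0\le k'<pq-\phi(pq)$, we have
$$\mathsf{rev}\left(x^{\phi(pq)+k'}\bmod{\Phi_{pq}(x)}\right)=x^{pq-1-k'}\bmod{\Phi_{pq}(x)}.$$
Now $pq-1-k'$ satisfies $pq-1-(p+q-2)\le pq-1-k'\le pq-1-q$, i.e. $\phi(pq)+1\le pq-1-k'\le pq-1-q$, and writing $pq-1-k'=\phi(pq)+k''$ we get $0\le k''\le p-2\le q-1$, a case already handled above. Since reversing a polynomial does not change the multiset of its coefficients, $\|x^{\phi(pq)+k'}\bmod{\Phi_{pq}(x)}\|_\infty = \|x^{pq-1-k'}\bmod{\Phi_{pq}(x)}\|_\infty = 1$. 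This exhausts all residues $k\bmod pq$, so the claim holds for all integers $k$.
\qed
\end{proof}
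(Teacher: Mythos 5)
Your proof is correct and is exactly the argument the paper intends (the paper simply cites Lemmas~\ref{lem:exp_b_1}, \ref{lem:exp_b_2}, \ref{lem:exp_b_3} without writing out the case split); you have filled in the details of reducing $k$ modulo $pq$ and covering the tail range $q\le k'\le p+q-2$ via the reversal symmetry. One trivial slip: $pq-1-(p+q-2)=\phi(pq)$, not $\phi(pq)+1$, so the correct range is $0\le k''\le p-2$ as you state in the next clause — this does not affect the argument since that range is still covered by Lemma~\ref{lem:exp_b_1}.
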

\begin{proof}
This follows from Lemma \ref{lem:exp_b_1}, \ref{lem:exp_b_2}, \ref{lem:exp_b_3}. \qed
\end{proof}

\subsection{Reduction Matrix}

For a clearer demonstration, we end this section with a numerical example and how our results apply to it. Before we proceed, we define the \emph{reduction matrix} $R_M$ of a cyclotomic polynomial $\Phi_M(x)$ as a $\phi(M)\times M$ matrix with $(i,j)$-element being the $i$th coefficient of $(x^j \bmod{\Phi_M(x)})$ for $0\leq i<\phi(M)$ and $0\leq j < M$. 

First, it is easy to see that $R_p=(I|\mathbf{-1})$, where $\mathbf{-1}$ denotes the column filled with $-1$. For example, not writing zeroes down,
$$ R_7 =
\left(
\begin{array}{c c c c c c | c}
+1 &   &   &   &   &   &  -1 \\
  & +1 &   &   &   &   &  -1 \\
  &   & +1 &   &   &   &  -1 \\
  &   &   & +1 &   &   &  -1 \\
  &   &   &   & +1 &   &  -1 \\
  &   &   &   &   & +1 &  -1 
\end{array}
\right).
$$

Regarding Lemma \ref{lem:exp_b_2}, \ref{lem:exp_b_3}, we can describe $R_{pq}$ as $(I|B_1|B_2|B_3)$, where $B_1,B_3 \in \Z^{\phi(pq)\times (p-1)}$ and $B_2 \in \Z^{\phi(pq)\times (q-p+1)}$. Lemma \ref{lem:exp_b_2} says that $B_2$ is a very structured Toeplitz matrix and Lemma \ref{lem:exp_b_3} says that $B_3$ is a 180$^{\circ}$ rotation of $B_1$. Corollary \ref{cor:exp_norm} says that every element of $R_{pq}$ is in $\{-1,0,1\}$, Corollary \ref{cor:lowest_coeff_is} says that every elements of the $0$th row of $B_1$ is $-1$, and Corollary \ref{cor:highest_coeff_is_one} says that every elements of the $(\phi(pq)-1)$th row of $B_1$ is $1$. We can check all these properties with $R_{3\cdot 7}$ which is listed below without zeroes written down.
$$ 
\left(
\begin{array}{c c c c c c c c c c c c | c c | c c c c c | c c}
+1 &   &   &   &   &   &   &   &   &   &   &   & -1 & -1 & -1 &    &    &    &    & +1 & +1 \\
  & +1 &   &   &   &   &   &   &   &   &   &   & +1 &    &    & -1 &    &    &    & -1 &    \\
  &   & +1 &   &   &   &   &   &   &   &   &   &    & +1 &    &    & -1 &    &    &    & -1 \\
  &   &   & +1 &   &   &   &   &   &   &   &   & -1 & -1 &    &    &    & -1 &    & +1 & +1 \\
  &   &   &   & +1 &   &   &   &   &   &   &   & +1 &    &    &    &    &    & -1 & -1 &    \\
  &   &   &   &   & +1 &   &   &   &   &   &   &    & +1 &    &    &    &    &    & -1 & -1 \\
  &   &   &   &   &   & +1 &   &   &   &   &   & -1 & -1 &    &    &    &    &    & +1 &    \\
  &   &   &   &   &   &   & +1 &   &   &   &   &    & -1 & -1 &    &    &    &    &    & +1 \\
  &   &   &   &   &   &   &   & +1 &   &   &   & +1 & +1 &    & -1 &    &    &    & -1 & -1 \\
  &   &   &   &   &   &   &   &   & +1 &   &   & -1 &    &    &    & -1 &    &    & +1  &   \\
  &   &   &   &   &   &   &   &   &   & +1 &   &    & -1 &    &    &    & -1 &    &    & +1 \\
  &   &   &   &   &   &   &   &   &   &   & +1 & +1 & +1 &    &    &    &    & -1 & -1 & -1  
\end{array}
\right)
$$

\begin{remark}\label{rem:tensor}
Let $\bar{M}$ be the largest square-free divisor of $M$ and $M'=M/\bar{M}$. 
Since $\Phi_M(x) = \Phi_{\bar{M}}(x^{M'})$, $R_M$ equals $R_{\bar{M}} \otimes I$, where $\otimes$ denotes the Kronecker product.
In particular, $R_{p^s} = R_p \otimes I$ and $R_{p^s q^t} = R_{pq} \otimes I$ hold.  
\end{remark}

\subsection{Patterns in Reduction Matrix}

\begin{lemma}\label{lem:exp_sum_zero}
The following inequality holds.
$$\sum_{i=0}^{q-1}(x^{j+ip} \bmod{\Phi_{pq}(x)}) = 0$$
\end{lemma}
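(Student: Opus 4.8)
The plan is to show that the \emph{unreduced} sum $\sum_{i=0}^{q-1} x^{j+ip}$ is already divisible by $\Phi_{pq}(x)$ in $\Z[x]$, and then to conclude by linearity of the reduction map. First I would factor out $x^j$ and recognize a geometric series in $x^p$:
\[
\sum_{i=0}^{q-1} x^{j+ip} \;=\; x^j \sum_{i=0}^{q-1} (x^p)^i \;=\; x^j \cdot \frac{x^{pq}-1}{x^p-1}.
\]

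Next I would identify the quotient $\frac{x^{pq}-1}{x^p-1}$ using the factorization already recalled in the proof of Lemma~\ref{lem:properties}, namely $\Phi_p(x)\,\Phi_q(x)\,\Phi_{pq}(x) = \sum_{i=0}^{pq-1} x^i = \frac{x^{pq}-1}{x-1}$, together with $x^p-1 = (x-1)\Phi_p(x)$. Dividing numerator and denominator of $\frac{x^{pq}-1}{x^p-1}$ by $x-1$ gives
\[
\frac{x^{pq}-1}{x^p-1} \;=\; \frac{\Phi_p(x)\,\Phi_q(x)\,\Phi_{pq}(x)}{\Phi_p(x)} \;=\; \Phi_q(x)\,\Phi_{pq}(x),
\]
so that $\sum_{i=0}^{q-1} x^{j+ip} = x^j\,\Phi_q(x)\,\Phi_{pq}(x)$ is a multiple of $\Phi_{pq}(x)$ in $\Z[x]$.

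Finally, since the operator $f(x) \mapsto \bigl(f(x)\bmod\Phi_{pq}(x)\bigr)$ is $\Z$-linear on $\Z[x]$, the sum of the reduced representatives equals the reduction of the sum:
\[
\sum_{i=0}^{q-1}\bigl(x^{j+ip}\bmod\Phi_{pq}(x)\bigr) \;=\; \Bigl(\sum_{i=0}^{q-1} x^{j+ip}\Bigr)\bmod\Phi_{pq}(x) \;=\; \bigl(x^j\,\Phi_q(x)\,\Phi_{pq}(x)\bigr)\bmod\Phi_{pq}(x) \;=\; 0,
\]
which is the claimed identity (the word ``inequality'' in the statement being a typo for ``equality''). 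There is essentially no serious obstacle in this argument; the only point worth spelling out is the linearity of the reduction operator, which is exactly what lets one pass from the single divisibility fact $\Phi_{pq}(x)\mid x^j\Phi_q(x)\Phi_{pq}(x)$ to the assertion about a sum of canonical representatives. (Note the identity in fact holds for every $j\in\Z$, not only $0\le j<pq$, by the same computation.)
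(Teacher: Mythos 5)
Your proposal is correct and follows essentially the same route as the paper: the paper also factors out $x^j$, identifies $\sum_{i=0}^{q-1}x^{ip}=\Phi_q(x)\,\Phi_{pq}(x)$, and concludes that the sum reduces to $0$. Your explicit remark about the $\Z$-linearity of the reduction map is a nice touch that justifies the interchange of summation and reduction, which the paper leaves implicit.
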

\begin{proof}
\begin{align*} 
\sum_{i=0}^{q-1}(x^{j+ip} \bmod{\Phi_{pq}(x)}) &=  x^j \cdot \sum_{i=0}^{q-1}x^{ip} \pmod{\Phi_{pq}(x)} \\ 
&= x^j \cdot \Phi_{pq}(x) \cdot \Phi_{q}(x) \pmod{\Phi_{pq}(x)} \\
&= 0 \pmod{\Phi_{pq}(x)}
\end{align*}
~\qed
\end{proof}

\begin{lemma}\label{lem:exp_nonzero_two}
For any $0 \leq k< \phi(pq)$ and $0 \leq j <p$, there are at most two $i$'s in $\{0,1,2, \cdots, q-1\}$ such that the $k$th coefficient of $(x^{j+ip} \bmod{\Phi_{pq}(x)})$ is nonzero.
\end{lemma}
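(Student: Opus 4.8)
The plan is to understand the structure of the columns of $R_{pq}$ indexed by an arithmetic progression with common difference $p$, namely $\{j, j+p, j+2p, \dots, j+(q-1)p\}$, and show that each row of the submatrix formed by these $q$ columns has at most two nonzero entries. First I would note that the exponents $j+ip$ for $i=0,\dots,q-1$ run over all residues modulo $q$ (since $(p,q)=1$), and hence, after reducing modulo $pq$, they are a permutation of $\{j \bmod q\text{-shifted multiples}\}$; more precisely the set $\{(j+ip) \bmod{pq} : 0\le i<q\}$ consists of exactly those residues in $[0,pq-1]$ congruent to $j$ modulo $p$. So the question becomes: among the $q$ columns of $R_{pq}$ whose index lies in a fixed residue class mod $p$, how many have a nonzero entry in a fixed row $k$?

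The key step is to invoke the explicit description of the columns of $R_{pq}$ from Lemmas~\ref{lem:exp_b_1}, \ref{lem:exp_b_2}, \ref{lem:exp_b_3} together with Remark~\ref{rem:tensor}. The first $\phi(pq)$ columns (indices $0\le j<\phi(pq)$) are the identity block, so each such column contributes a nonzero entry in exactly one row, and in row $k$ only the single column with index $k$ is nonzero. The remaining $pq-\phi(pq)=p+q-1$ columns are $B_1\mid B_2\mid B_3$. By Lemma~\ref{lem:exp_b_2}, the columns of $B_2$ (indices $\phi(pq)+p-1,\dots,\phi(pq)+q-1$) are $-x^{k-(p-1)}\sum_{i=0}^{p-2}x^{qi}$: such a column has its nonzero entries precisely at rows $\equiv k-(p-1) \pmod q$, so a fixed row meets at most one column of $B_2$. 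For $B_1$ and $B_3$ I would use Lemma~\ref{lem:exp_b_1}: the column $x^{\phi(pq)+k'}\bmod\Phi_{pq}(x) = x^{\phi(pq)+k'} - \Phi_{pq}(x)\sum_{i=0}^{k'}x^i$ for $0\le k'\le p-2$, and one reads off which rows are nonzero from the combinatorial characterization of $d_j$ in that lemma's proof. The cleanest route, though, is probably to argue directly from Lemma~\ref{lem:exp_sum_zero}: the $q$ columns indexed by $\{j+ip\}$ sum to the zero vector, which already forces strong cancellation; combined with the fact (Corollary~\ref{cor:exp_norm}) that every entry is in $\{-1,0,1\}$, a row with a single nonzero entry is impossible, so the count of nonzero entries in any row is either $0$ or $\ge 2$ — and then I only need the upper bound $\le 2$.

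For the upper bound $\le 2$, I would split into cases on where the index $j+ip$ (reduced mod $pq$) falls: in $[0,\phi(pq)-1]$ (identity part), or in $[\phi(pq), \phi(pq)+p-2]$ ($B_1$ part), or in $[\phi(pq)+p-1,\phi(pq)+q-1]$ ($B_2$ part), or in $[\phi(pq)+q,pq-1]$ ($B_3$ part, which by Lemma~\ref{lem:exp_b_3} is the $180^\circ$ rotation of $B_1$). Among the $q$ columns in a fixed class mod $p$: exactly one lands in the identity part and contributes a nonzero in exactly one prescribed row; at most one lands in the $B_2$ part (since the $B_2$ indices span only $q-p+1<q$ consecutive values, but more carefully, they occupy $q-p+1$ values and the progression mod $p$ hits each block of $p$ consecutive integers at most once) and it is nonzero only in rows $\equiv k-(p-1)\pmod q$; the columns landing in $B_1\cup B_3$ number at most two (since $B_1$ and $B_3$ each span $p-1<p$ consecutive indices, so the progression hits each at most once), and by Lemma~\ref{lem:exp_b_1} each such column is nonzero in a controlled set of rows. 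Assembling these, a fixed row $k$ is hit by: possibly the unique identity column (if $k<\phi(pq)$ and $k\equiv j\pmod p$), possibly the $B_2$ column, and possibly the $B_1$/$B_3$ columns — and the arithmetic of which rows Lemma~\ref{lem:exp_b_1} marks nonzero has to be checked to rule out three simultaneous hits.

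**The main obstacle** I anticipate is precisely this last bookkeeping: showing that the row-supports coming from the (up to two) $B_1$/$B_3$ columns, the $B_2$ column, and the identity column cannot overlap in three of them at once for a common row $k$. This requires carefully reading off the nonzero-row pattern of each $x^{\phi(pq)+k'}\bmod\Phi_{pq}(x)$ from the $d_j$ characterization in Lemma~\ref{lem:exp_b_1} — in particular that consecutive such columns (which is what a mod-$p$ progression produces within a short block) have nearly disjoint or nested supports — and tracking how the $180^\circ$ rotation in Lemma~\ref{lem:exp_b_3} relabels rows. I would try to sidestep the worst of it by a counting/parity argument using Lemma~\ref{lem:exp_sum_zero}: since the $q$ relevant columns sum to zero and the total column-dimension decomposes as $\phi(pq) + (p-1) + (q-p+1) + (p-1)$, a dimension count on how many of the $q$ progression-columns land in each block, combined with $\{-1,0,1\}$-valuedness, may pin the per-row count to $\{0,2\}$ directly without enumerating supports. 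If that shortcut fails, the fallback is the explicit but tedious case analysis sketched above.
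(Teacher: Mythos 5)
Your setup is right and you have already assembled every ingredient the paper uses: the columns indexed by $\{j+ip\}_i$ sum to zero (Lemma~\ref{lem:exp_sum_zero}), every entry is in $\{-1,0,1\}$ (Corollary~\ref{cor:exp_norm}), at most one progression column lands in each of $B_1$ and $B_3$ because each block has width $p-1<p$, and in a fixed row $k$ the blocks $I$ and $B_2$ each have at most one nonzero column. But you stop one observation short of a proof and explicitly leave the remaining step (``rule out three simultaneous hits'') as an open obstacle. As you tally it, the number of potential hits in row $k$ is four (one each from $I$, $B_2$, $B_1$, $B_3$), and the parity shortcut you hope for cannot close this gap on its own: the fact that the entries are $\pm1$ and sum to zero only excludes \emph{odd} counts, so it still leaves $4$ as a live possibility, and you would then be thrown back on exactly the support bookkeeping for the $B_1$/$B_3$ columns that you were trying to avoid.

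The missing idea is a one-line congruence. In row $k$, the unique nonzero column of $I$ is column $k$, while the unique possibly nonzero column of $B_2$ is column $pq-q+k-q\lfloor k/q\rfloor$ (read off from Lemma~\ref{lem:exp_b_2}); their difference is $-q\bigl(1+\lfloor k/q\rfloor\bigr)$, and since $1+\lfloor k/q\rfloor\in[1,p-1]$ for $0\le k<\phi(pq)$ and $(p,q)=1$, these two indices are never congruent modulo $p$. Hence at most one of them can lie in the progression $\{j+ip\}_i$, and the count of potential hits drops to three: one from $I$-or-$B_2$, one from $B_1$, one from $B_3$. Now the parity argument does finish: three nonzero entries, each $\pm1$, cannot sum to zero, so the count in any row is at most two. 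This is exactly the paper's proof; no enumeration of which rows the $B_1$/$B_3$ columns support is needed.
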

\begin{proof}
Combining the facts that nonzero $k$th coefficients of $(x^{j+ip} \bmod{\Phi_{pq}(x)})$ are either $-1$ or $+1$ (Corollary \ref{cor:exp_norm}) and they sum up to zero (Lemma \ref{lem:exp_sum_zero}), it is sufficient to show that there are at most three $i$'s with nonzero $k$th coefficient of $(x^{j+ip} \bmod{\Phi_{pq}(x)})$.

Revisiting the reduction matrix $(I|B_1|B_2|B_3)$, at most one column corresponding to one of $\{x^{j+ip}\}_i$ may lie in each of $B_1,B_3\in \Z^{\phi(pq)\times (p-1)}$. In the $k$th row of the reduction matrix, $I$ has the only nonzero element at the coordinate of $(k,k)$ and $B_2$ may have nonzero element only at the coordinate of $(k,pq-q+k-q\lfloor k/q \rfloor)$ if this coordinate lies in $B_2$. However, $k\neq pq-q+k-q\lfloor k/q \rfloor \pmod{p}$ for any $0\leq k <\phi(pq)$. Hence, the lemma is proved.  \qed
\end{proof}

\begin{lemma}\label{lem:exp_sum_norm}
For any subset $I \subset \{0,1,2, \cdots, q-1\}$ and $0 \leq j <p$, the following inequality holds. 
$$\left|\left|\sum_{i\in I}\left(x^{j+ip} \bmod{\Phi_{pq}(x)} \right)\right|\right|_{\infty} \leq 1$$
\end{lemma}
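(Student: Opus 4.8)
The plan is to combine the two structural facts already established: that the nonzero entries in any row of the reduction matrix are $\pm 1$ (Corollary~\ref{cor:exp_norm}), and that among the columns indexed by $\{j+ip : 0\le i\le q-1\}$ at most two have a nonzero entry in any fixed row $k$ (Lemma~\ref{lem:exp_nonzero_two}). Fix a row index $0\le k<\phi(pq)$ and consider the $k$th coefficient of $\sum_{i\in I}(x^{j+ip}\bmod\Phi_{pq}(x))$. By Lemma~\ref{lem:exp_nonzero_two}, at most two of the values $i\in\{0,\dots,q-1\}$ contribute a nonzero $k$th coefficient, so \emph{a fortiori} at most two indices $i\in I$ contribute; hence the $k$th coefficient of the sum is a sum of at most two terms, each in $\{-1,+1\}$, so its absolute value is at most $2$.

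To sharpen the bound from $2$ to $1$, I would argue that the two possibly-nonzero entries in row $k$ among the full family $\{x^{j+ip}\}_{0\le i\le q-1}$ must have \emph{opposite} signs. This is forced by Lemma~\ref{lem:exp_sum_zero}: the full sum over $i=0,\dots,q-1$ is the zero polynomial, so in particular its $k$th coefficient is $0$; since that coefficient is a sum of at most two values from $\{-1,+1\}$ equalling $0$, either both are absent (the coefficient is $0$ for every $i$) or there are exactly two nonzero ones and they are $+1$ and $-1$. In the first case every partial sum over $I$ also has $k$th coefficient $0$; in the second case a partial sum over $I$ picks up a subset of $\{+1,-1\}$, whose total lies in $\{-1,0,+1\}$. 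Either way the $k$th coefficient of $\sum_{i\in I}(x^{j+ip}\bmod\Phi_{pq}(x))$ has absolute value at most $1$. Since $k$ was arbitrary, the $\infty$-norm bound follows.

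The only real subtlety — and the step I would be most careful about — is making sure the "at most two, and if two then they cancel" dichotomy is applied to the same pair of columns for the partial sum as for the full sum. This is immediate once one observes that the set of indices $i\in\{0,\dots,q-1\}$ for which the $k$th coefficient of $(x^{j+ip}\bmod\Phi_{pq}(x))$ is nonzero is a fixed set $S_k$ of size $0$ or $2$ (size $1$ is impossible by the sum-to-zero constraint of Lemma~\ref{lem:exp_sum_zero} together with Corollary~\ref{cor:exp_norm}), and restricting to $I$ just replaces $S_k$ by $S_k\cap I$, whose signed sum is a sub-sum of $\{+1,-1\}$ and hence in $\{-1,0,1\}$. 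No new computation with the explicit forms of $B_1,B_2,B_3$ is needed beyond what Lemmas~\ref{lem:exp_sum_zero} and \ref{lem:exp_nonzero_two} already give; the argument is purely a counting-and-signs argument row by row.
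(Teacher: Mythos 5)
Your argument is correct and is essentially the paper's own proof: both combine Lemma~\ref{lem:exp_sum_zero}, Lemma~\ref{lem:exp_nonzero_two}, and Corollary~\ref{cor:exp_norm} to conclude that, in each row, the nonzero entries among the $q$ relevant columns are either absent or exactly one $+1$ and one $-1$, so every subset-sum lies in $\{-1,0,1\}$. Your write-up just spells out the "size $1$ is impossible" dichotomy more explicitly than the paper does.
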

\begin{proof}
From Lemma \ref{lem:exp_sum_zero}, \ref{lem:exp_nonzero_two}, and Corollary \ref{cor:exp_norm}, it is easy to see that the $k$th coefficients of $\{x^{j+ip} \bmod{\Phi_{pq}(x)}\}_{0 \leq i < q}$ are either all zero, or all zero except for one $+1$ and one $-1$.
Subset-sums of these sets are in ${-1,0,1}$. 
\qed
\end{proof}

\begin{corollary}
\label{cor:exp_sum_norm_psqt}
Let $M=p^sq^t$ and $M'=M/(pq)$ be integers where $s$ and $t$ are positive integers.
For any integer $0 \leq j <p$ and any family of subsets $I_k \subset \{0,1,2, \cdots, q-1\}$ on $0\leq k<M'$, the following inequality holds. 
$$\left|\left| \sum_{k=0}^{M'-1} \sum_{i\in I_k} \left( x^{(j+ip)M'+k} \bmod{\Phi_{M}(x)} \right)\right|\right|_{\infty} \leq 1$$
\end{corollary}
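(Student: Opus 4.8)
The plan is to reduce Corollary~\ref{cor:exp_sum_norm_psqt} to Lemma~\ref{lem:exp_sum_norm} via the tensor structure of the reduction matrix recorded in Remark~\ref{rem:tensor}. Recall that $\Phi_{M}(x)=\Phi_{pq}(x^{M'})$, so that for any integer $n$ we have the identity $x^{nM'+k}\bmod{\Phi_M(x)} = \bigl(x^n\bmod{\Phi_{pq}(x)}\bigr)\big|_{x\mapsto x^{M'}}\cdot x^k$ for $0\le k<M'$; more precisely, reduction modulo $\Phi_M(x)$ acts on the ``block'' indexed by the exponent of $x^{M'}$ exactly as reduction modulo $\Phi_{pq}(x)$ acts on ordinary exponents, while the residue $k$ of the exponent modulo $M'$ is left untouched. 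This is the content of $R_M = R_{pq}\otimes I$.

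Concretely, first I would fix $j$ and the family $\{I_k\}_{0\le k<M'}$ and rewrite the inner sums: for each fixed $k$,
\begin{align*}
\sum_{i\in I_k}\bigl(x^{(j+ip)M'+k}\bmod{\Phi_M(x)}\bigr)
= x^k\cdot\Bigl(\textstyle\sum_{i\in I_k}\bigl(y^{\,j+ip}\bmod{\Phi_{pq}(y)}\bigr)\Bigr)\Big|_{y=x^{M'}}.
\end{align*}
By Lemma~\ref{lem:exp_sum_norm}, the polynomial $g_k(y):=\sum_{i\in I_k}\bigl(y^{\,j+ip}\bmod{\Phi_{pq}(y)}\bigr)$ has $\|g_k\|_\infty\le 1$ and degree at most $\phi(pq)-1$. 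Substituting $y=x^{M'}$ spreads the coefficients of $g_k$ onto exponents that are multiples of $M'$, and multiplying by $x^k$ shifts them into the residue-class-$k$ slots; since $0\le k<M'$, the supports for distinct $k$ are disjoint. Hence the total sum over all $k$ has, in each coefficient slot, a value equal to a single coefficient of some $g_k$, which lies in $\{-1,0,1\}$. This gives the claimed bound.

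The only point requiring care — and the step I expect to be the main (minor) obstacle — is justifying that reduction modulo $\Phi_M(x)$ really does commute with the substitution $y=x^{M'}$ in the sense used above, i.e. that $\bigl(h(y)\bmod{\Phi_{pq}(y)}\bigr)\big|_{y=x^{M'}} = h(x^{M'})\bmod{\Phi_M(x)}$ for $h\in\Z[y]$. This follows because $\Phi_M(x)=\Phi_{pq}(x^{M'})$ divides $\bigl(h(x^{M'}) - (h(y)\bmod{\Phi_{pq}(y)})\big|_{y=x^{M'}}\bigr)$ — indeed $h(y) - (h(y)\bmod\Phi_{pq}(y))$ is a multiple of $\Phi_{pq}(y)$ in $\Z[y]$, and specializing $y\mapsto x^{M'}$ preserves divisibility — together with the degree bound $\deg_y(h(y)\bmod\Phi_{pq}(y))<\phi(pq)$, which after substitution and the extra factor $x^k$ with $k<M'$ keeps the total degree below $\phi(pq)M' = \phi(M)$, so the reduced form is unique. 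Alternatively one can phrase the whole argument purely in terms of the matrix identity $R_M=R_{pq}\otimes I$ from Remark~\ref{rem:tensor}: the vector of coefficients of $\sum_k\sum_{i\in I_k}(x^{(j+ip)M'+k}\bmod\Phi_M(x))$ is obtained by applying $R_{pq}\otimes I$ to the corresponding $0/1$ indicator vector, which decomposes blockwise so that each block is $R_{pq}$ applied to an indicator of a subset of $\{j+ip : 0\le i<q\}$, and Lemma~\ref{lem:exp_sum_norm} bounds each such block by $1$ in $\infty$-norm. Either presentation closes the proof with essentially no computation.
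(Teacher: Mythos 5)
Your proof is correct and follows essentially the same route as the paper: split the sum by the residue $k$ of the exponent modulo $M'$, observe that distinct residue classes occupy disjoint coefficient slots, and bound each class by Lemma~\ref{lem:exp_sum_norm} after the substitution $x\mapsto x^{M'}$ (equivalently, via $R_M=R_{pq}\otimes I$). Your explicit justification that reduction modulo $\Phi_M(x)=\Phi_{pq}(x^{M'})$ commutes with the substitution is a detail the paper leaves implicit, but the argument is the same.
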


\begin{proof}
Since $\left( x^{(j+ip)M'+k} \bmod{\Phi_{M}(x)} \right)$ has nonzero coefficients only at the degrees those equal to $k$ modulo $M'$, terms with distinct $k$ do not interfere with each others.
Therefore, it is sufficient to prove the following inequality, which can be obtained from Lemma \ref{lem:exp_sum_norm} with $x$ substituted by $x^{M'}$.
$$\left|\left|  \sum_{i\in I_k} \left( x^{(j+ip)M'} \bmod{\Phi_{M}(x)} \right)\right|\right|_{\infty} \leq 1$$ \qed
\end{proof}

\section{Scaled Inverse of $(x^i-x^j)$ modulo $\Phi_{p^sq^t}(x)$}\label{sec:scinverse}

In this section, we prove Theorem \ref{thm:psqt} and \ref{thm:psqt2} regarding the scaled inverse of $(x^i-x^j)$ modulo $\Phi_{p^sq^t}(x)$. 
We begin with Theorem \ref{thm:psqt}. 
Theorem~\ref{thm:psqt} says the coefficient size of the (scaled) inverse of $(x^i-x^j)$ is bounded by $p-1$ modulo $\Phi_{p^s q^t}(x)$, if $p^s\nmid (i-j)$ and $q^t\nmid (i-j)$. 
The proof outline is similar to the proof of Theorem~\ref{thm:ps}. 
However, the details require the results in Section~\ref{sec:prop_pq}.

\begin{theorem}\label{thm:psqt}
Let $p$ and $q$ be primes satisfying $p<q$, and let $M=p^sq^t$ be an integer where $s$ and $t$ are positive integers.
For any integers $0\leq j<i<M$ satisfying $p^s\nmid (i-j)$ and $q^t\nmid (i-j)$, there exists $u(x) \in \Z[x]/\Phi_M(x)$ such that \begin{itemize}[label=$\bullet$]
        \item $(x^i - x^j) \cdot u(x) = 1 \pmod{\Phi_{M}(x)}$
        \item and $||u(x)||_{\infty} \le p-1$.
    \end{itemize}
\end{theorem}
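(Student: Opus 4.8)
The plan is to follow the template of the proof of Theorem~\ref{thm:ps}: exhibit an explicit polynomial that reduces to $\pm 1$ modulo $\Phi_M(x)$ after multiplication by $x^i-x^j$, and then bound the coefficients of the reduced element using the results of Section~\ref{sec:prop_pq}. Write $e:=i-j$, let $p^a$ and $q^b$ be the exact powers of $p$ and $q$ dividing $e$, and set $r:=e/(p^aq^b)$, so $(r,pq)=1$; the hypotheses $p^s\nmid e$ and $q^t\nmid e$ say precisely that $a\le s-1$ and $b\le t-1$. Put $M':=M/(pq)=p^{s-1}q^{t-1}$ and
\[
w(x):=\frac{\Phi_M(x^r)-1}{x^e-1}.
\]
First I would check that $w(x)\in\Z[x]$ and that it inverts $x^e-1$. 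Since $\Phi_M(x^r)=\Phi_{pq}(x^{M'r})$ and $\Phi_{pq}(1)=1$ by Lemma~\ref{lem:properties}(a), the polynomial $\Phi_{pq}(y)-1$ is divisible by $y-1$, so $x^{M'r}-1$ divides $\Phi_M(x^r)-1$; and $x^e-1$ divides $x^{M'r}-1$ because $e\mid M'r$, which is exactly where $a\le s-1$ and $b\le t-1$ are used. Since $(r,M)=1$ we also have $\Phi_M(x)\mid\Phi_M(x^r)$, hence $(x^e-1)\,w(x)=\Phi_M(x^r)-1\equiv -1\pmod{\Phi_M(x)}$. Using $x^{M}\equiv 1\pmod{\Phi_M(x)}$, the element $u(x):=\bigl(-x^{M-j}w(x)\bigr)\bmod\Phi_M(x)$ then satisfies $(x^i-x^j)\,u(x)=-x^{M}(x^e-1)\,w(x)\equiv 1\pmod{\Phi_M(x)}$, which is the first bullet.

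Next I would make the shape of $u$ explicit. By Lemma~\ref{lem:phi_pq_diff_coeff}, writing $(\Phi_{pq}(y)-1)/(y-1)=\sum_{m}b_m y^m$ with $b_m\in\{0,1\}$ and $m<\phi(pq)$, one gets $w(x)=\bigl(\sum_m b_m\,x^{M'rm}\bigr)\bigl(\sum_{n=0}^{\ell-1}x^{en}\bigr)$ where $\ell:=M'r/e=p^{s-1-a}q^{t-1-b}$. A short congruence computation---using $(r,M)=1$, $0\le n<\ell$, $0\le m<\phi(pq)<pq$ and $e\ell=M'r$---shows that the exponents $M'rm+en+M-j$ occurring in $-x^{M-j}w(x)$ are pairwise distinct modulo $M$. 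Therefore $-x^{M-j}w(x)\bmod (x^M-1)$ has all its coefficients in $\{-1,0\}$, and reducing once more modulo $\Phi_M(x)$ gives $u(x)=-\sum_{(m,n)\,:\,b_m=1}\bigl(x^{E_{m,n}}\bmod\Phi_M(x)\bigr)$, where the exponents $E_{m,n}\in[0,M)$ are distinct.

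It remains to bound $||u||_{\infty}$. Group the contributing pairs $(m,n)$ by the residue $g(m,n):=\lfloor E_{m,n}/M'\rfloor\bmod p$; since the $E_{m,n}$ are distinct, within each group and for each $k\in[0,M')$ the corresponding set of quotients $\lfloor\lfloor E_{m,n}/M'\rfloor/p\rfloor$ is a genuine subset of $[0,q)$, so Corollary~\ref{cor:exp_sum_norm_psqt} bounds each of the (at most $p$) group-sums by $1$, giving $||u||_{\infty}\le p$. Upgrading this to $p-1$ is the step I expect to be the main obstacle. The extra ingredient is Corollary~\ref{cor:phi_pq_diff_coeff}(a): $b_m=0$ whenever $p\mid m$, so every contributing $m$ satisfies $p\nmid m$. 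Combined with the identity $g(m,n)\equiv rm+c(n)\pmod p$---where $c(n)$ depends only on $n$ (and $j$)---and with the vanishing sums $\sum_{i=0}^{q-1}\bigl(x^{(g+ip)M'+k}\bmod\Phi_M(x)\bigr)=0$ coming from Lemma~\ref{lem:exp_sum_zero}, one can rebalance the $p$ group-sums into at most $p-1$ bundles, each still of the form handled by Corollary~\ref{cor:exp_sum_norm_psqt}. Making this rebalancing precise---untangling the coupling between the constraint on $m$ coming from $b_m$ and the geometric-progression index $n$---is the delicate part; the rest is bookkeeping parallel to the $s=1$ case of Theorem~\ref{thm:ps}, and tightness of the constant $p-1$ can be checked on small instances as after that theorem.
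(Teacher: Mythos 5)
Your construction is exactly the paper's: the same $v(x)=(\Phi_M(x^{\gamma})-1)/(x^{i-j}-1)$, the same $u(x)=-x^{M-j}v(x)\bmod\Phi_M(x)$, the same verification of the first bullet via $\Phi_M(x)\mid\Phi_M(x^{\gamma})$, and the same strategy of reducing the norm bound to Corollary~\ref{cor:exp_sum_norm_psqt} after checking that the exponents are pairwise distinct modulo $M$ with all coefficients in $\{-1,0\}$. Everything you write down rigorously, up to and including $||u(x)||_{\infty}\le p$, is correct. The genuine gap is the last step: you never actually prove $||u(x)||_{\infty}\le p-1$. The ``rebalancing'' via Lemma~\ref{lem:exp_sum_zero} is only gestured at, and as described it cannot work: replacing a subset $I_k$ by its complement flips the sign of that partial sum but does not reduce the number of residues $j'=\lfloor E/M'\rfloor\bmod p$ that occur, and Corollary~\ref{cor:exp_sum_norm_psqt} requires a single fixed $j'$ per application. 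As written, your proposal proves the theorem with $p$ in place of $p-1$, which is strictly weaker than the statement.

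That said, you have put your finger on a real subtlety rather than missed an idea the paper makes explicit. The paper handles this step in one sentence: since $b_m=0$ whenever $p\mid m$ (Corollary~\ref{cor:phi_pq_diff_coeff}(a)), the monomials are asserted to fall into $p-1$ classes in the sense of Corollary~\ref{cor:exp_sum_norm_psqt}. When $s=t=1$ (so $\ell=1$ and $n=0$) the class of the monomial indexed by $m$ is $(\gamma m-j)\bmod p$, which omits the residue $-j$ because $p\nmid\gamma m$, and the paper's sentence is a complete argument. But for $\ell>1$ the class is $(\gamma m+\lfloor(M-j+(i-j)n)/M'\rfloor)\bmod p$ --- exactly your $rm+c(n)$ --- and since $c(n)$ need not be constant the union over admissible $(m,n)$ can hit all $p$ residues. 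Concretely, for $M=45$ ($p=3$, $q=5$, $s=2$, $t=1$, $M'=3$), $i=2$, $j=0$, the twelve exponents of $\tilde u\bmod(x^{45}-1)$ are $\{1,6,8,10,12,14,16,24,26,28,42,44\}$ and realize all three values of $\lfloor E/3\rfloor\bmod 3$; the theorem still holds there (the actual norm is $1$), but the partition-plus-triangle-inequality argument alone only yields the bound $3=p$. So the passage from $p$ to $p-1$ is precisely the step that must be supplied, and neither your sketch nor a literal reading of the paper's one-line justification supplies it for $\ell>1$.
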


\begin{proof}
Let $p^{\alpha}$ be the largest power of $p$ dividing $i-j$, let $q{^\beta}$ be the largest power of $q$ dividing $i-j$, and let $\gamma:=(i-j)/(p^\alpha q^\beta)$.
Let us denote $M'=p^{s-1} q^{t-1}$.
Consider the following polynomial $v(x) \in \Z[x]$. Note that $\alpha \leq s-1$ and $\beta \leq t-1$ by the assumption.
\begin{align*} 
v(x) &:= \frac{\Phi_{M}(x^\gamma)-1}{x^{p^{\alpha}q^{\beta}\gamma}-1} \\ 
&=  \frac{\Phi_{pq}(x^{M' \gamma})-1}{x^{M' \gamma}-1} \cdot \frac{x^{M' \gamma}-1}{x^{p^{\alpha}q^{\beta}\gamma}-1}
\end{align*}
We claim that $\tilde{u}(x)=-x^{M-j}\cdot v(x) \in \Z[x]$ satisfies the conditions after being reduced by $\Phi_{M}(x)$.
By definition, the first condition can be easily checked by the fact that $\Phi_{M}(x)$ divides $\Phi_{M}(x^\gamma)$ since $(M,\gamma)=1$.

For the second condition, first observe that the degrees of monomials with nonzero coefficients in $\tilde{u}(x)$ are same modulo $p^\alpha q^\beta \gamma$.
Moreover, the coefficients of $\tilde{u}(x)$ are either $-1$ or $0$ by Lemma \ref{lem:phi_pq_diff_coeff}.
Since $(M, \gamma)=1$, when reduced modulo $x^{M}-1$, each monomials of $\tilde{u}(x)$ are reduced to distinct-degree monomials (degrees being same modulo $p^\alpha q^\beta$) with coefficients remaining in $\{-1,0\}$. 

Since there are no monomials with a degree of multiple of $p M'$ in $v(x)$ (Corollary~\ref{cor:phi_pq_diff_coeff} (a)), we can group the monomials of $(\tilde{u}(x) \bmod{\Phi_M(x}))$ into $p-1$ classes according to the setting of Corollary \ref{cor:exp_sum_norm_psqt}. 
Then applying Corollary \ref{cor:exp_sum_norm_psqt} together with the triangle inequality, we are done. \qed
\end{proof}

We remark that Theorem~\ref{thm:psqt} is \emph{quite} tight according to the following lemma.\footnote{We remark that there are $M$'s whose $u(x)$ satisfy $||u(x)||_\infty \leq p-2$ for all $i$ and $j$ (e.g. 35). 
On the other hand, there are also $M$'s whose $u(x)$ satisfy $||u(x)||_\infty = p-1$ for some $i$ and $j$ (e.g. 33).} 

\begin{lemma}\label{lem:pq_lower}
For $u(x)$ defined in Theorem~\ref{thm:psqt} with $i=p^{s-1}q^{t-1}(p-1)$ and $j=p^{s-1}q^{t-1}(p-2)$, the following inequality holds.
$$||u(x)||_\infty \geq p-2$$
\end{lemma}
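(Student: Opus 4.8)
The statement concerns the specific polynomial $u(x)$ produced by the proof of Theorem~\ref{thm:psqt}, for the particular exponents $i = M'(p-1)$, $j = M'(p-2)$ where $M' = p^{s-1}q^{t-1}$. For these choices, $i - j = M' = p^{s-1}q^{t-1}$, so in the notation of that proof $\alpha = s-1$, $\beta = t-1$, $\gamma = 1$, and $p^\alpha q^\beta \gamma = M'$. I plan to simply unwind the construction with these values plugged in and read off one coefficient of $u(x)$ that is as large as possible in absolute value, showing it is at least $p-2$ in magnitude.

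**Key steps.** First I would write out $v(x) = (\Phi_M(x) - 1)/(x^{M'} - 1) = (\Phi_{pq}(x^{M'}) - 1)/(x^{M'} - 1)$, which by Lemma~\ref{lem:phi_pq_diff_coeff} (after substituting $x \mapsto x^{M'}$) has coefficients $b_\ell \in \{0,1\}$ at degrees $\ell M'$ for $0 \le \ell \le \phi(pq) - 1$, with $b_\ell$ governed by whether $\alpha' p + \beta' q = \ell$ has a non-negative solution. Then $\tilde u(x) = -x^{M - j}\cdot v(x) = -x^{M - M'(p-2)}\cdot v(x)$. Reducing modulo $x^M - 1$ permutes/shifts the monomials of $\tilde u$ to distinct degrees all congruent modulo $M'$ (since $(M, 1) = 1$ here trivially, and degrees of $v$ are multiples of $M'$), with coefficients staying in $\{-1, 0\}$. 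Next, reducing modulo $\Phi_M(x) = \Phi_{pq}(x^{M'})$: using $R_{pq} = (I \mid B_1 \mid B_2 \mid B_3)$ tensored with $I$ (Remark~\ref{rem:tensor}), the columns of $x^{\ell M'}$ for $\ell \ge \phi(pq)$ get expanded via Lemmas~\ref{lem:exp_b_1}, \ref{lem:exp_b_2}, \ref{lem:exp_b_3}. The plan is to track the contributions landing in a single coefficient — I expect the natural target is the $0$th coefficient (or the $(\phi(pq)-1)$th coefficient after the $M'$-scaling), where Corollary~\ref{cor:lowest_coeff_is} tells us each high power $x^{\phi(pq)+k}$ with $0 \le k < p-1$ contributes exactly $-1$. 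Counting how many of the $p-1$ residue classes (in the sense of Corollary~\ref{cor:exp_sum_norm_psqt}) actually contribute a nonzero term to that coefficient, and checking the signs align, should give a coefficient of absolute value $p-2$ (one class fails to contribute, or cancels, which is why the bound is $p-2$ rather than $p-1$, consistent with the footnote).

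**Main obstacle.** The delicate part is the bookkeeping: after the shift by $x^{M - M'(p-2)}$ and the reduction modulo $x^M - 1$, I must determine precisely which of the degrees appearing in $\tilde u \bmod (x^M-1)$ are $\ge \phi(pq)M'$ (so that Lemmas~\ref{lem:exp_b_1}–\ref{lem:exp_b_3} apply and produce a $-1$ in the target slot) versus which stay below $\phi(M)$ (contributing via the identity block). Concretely I need to count, among $\ell \in \{0, \dots, \phi(pq)-1\}$ with $b_\ell = 1$, how many satisfy $\ell - (p-2) \pmod{pq}$ (the shifted index) land in the range forcing a $-1$ contribution to the chosen coefficient, and verify there are at least $p-2$ of them with no cancellation. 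This is a finite but somewhat intricate count over residues modulo $pq$; I would organize it by invoking Corollary~\ref{cor:phi_pq_diff_coeff}(b) (which pins down the zero pattern of $b_\ell$ for $0 \le \ell < q$) and Corollary~\ref{cor:phi_pq_diff_coeff}(c) (all $b_\ell$ with $\ell \ge \phi(pq)$ vanish), so that the nonzero $b_\ell$ relevant to the shifted window are exactly the $\ell \in [0, q-1]$ not divisible by $p$, of which there are $q - 1 - (q-1)/\gcd$-ish — more than enough to extract $p - 2$ aligned contributions. Verifying the absence of cancellation among these is the crux, and I expect it to follow from the same "distinct residues mod $p$" observation used in the proof of Lemma~\ref{lem:exp_nonzero_two}.
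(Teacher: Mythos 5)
Your plan is viable and genuinely different from the paper's proof, but as written it stops short of the decisive computation. The paper does not unwind the construction of Theorem~\ref{thm:psqt} at all: after the same reduction to the $M=pq$ case via $y=x^{p^{s-1}q^{t-1}}$, it simply writes down a closed-form candidate $\tilde{u}(x)=\Phi_{pq}(x)+(p-1)\cdot\frac{\Phi_{pq}(x)-1}{x-1}-\sum_{i=1}^{p-1}\frac{x^{iq-p+2}-1}{x-1}$, checks that it has degree at most $\phi(pq)-1$ and that $\tilde{u}(x)\cdot(x^{p-1}-x^{p-2})=1\pmod{\Phi_{pq}(x)}$, and then reads off the constant term $1+0-(p-1)=-(p-2)$. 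That route trades your bookkeeping for one algebraic verification. Your route buys a more mechanical argument that reuses the reduction-matrix machinery, and it does close: with $i-j=M'$ one gets $\gamma=1$, $v(x)=\frac{\Phi_{pq}(y)-1}{y-1}$ with coefficients $b_\ell$, and $\tilde{u}=-y^{pq-(p-2)}v(y)$; reducing mod $y^{pq}-1$ sends $\ell\in[p-2,\phi(pq)-1]$ to degree $\ell-p+2$ (identity block) and $\ell\in[0,p-3]$ to degree $pq-p+2+\ell$, which lies in the $B_3$ block.

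The gap is that you defer exactly the step that decides the constant $p-2$, and two of the details you sketch would lead you astray if followed literally. First, the high-degree monomials here are $y^{pq-1-k}$ for $0\le k\le p-3$, i.e.\ columns of $B_3$, not $B_1$; Corollary~\ref{cor:lowest_coeff_is} does not apply to them directly, and you need Lemma~\ref{lem:exp_b_3} together with Corollary~\ref{cor:highest_coeff_is_one} to conclude that each such column has $+1$ in its $0$th row (equivalently, the last two block-columns of the displayed $R_{21}$). Second, your hedge between the $0$th and the $(\phi(pq)-1)$th coefficient matters: targeting the $(\phi(pq)-1)$th coefficient only yields $|{+(p-3)}|$, because the identity block contributes nothing there; you must take the $0$th coefficient, where Corollary~\ref{cor:phi_pq_diff_coeff}(b) gives $b_\ell=1$ exactly for $\ell\in\{1,\dots,p-3\}$ (contributing $(-1)\cdot(+1)=-1$ each through $B_3$) and $b_{p-2}=1$ contributes one more $-1$ through the identity column at degree $0$, for a total of $-(p-2)$ with no cancellation. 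The count is exact, not a matter of having ``more than enough'' nonzero $b_\ell$; until that tally and the sign alignment are written out, the proposal is an outline rather than a proof.
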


\begin{proof}
Using the proof of Theorem~\ref{thm:psqt} and the following equalities, we can reduce the general case to the $M=pq$ case with $s=1$ and $t=1$. 
\begin{align*} 
||u(x)||_\infty &= \left|\left| -x^{M-j}\cdot \frac{\Phi_{M}(x^\gamma)-1}{x^{p^{\alpha}q^{\beta}\gamma}-1} \bmod{\Phi_{M}(x) } \right|\right|_{\infty} \\
&= \left|\left| -y^{pq-(p-2)}\cdot \frac{\Phi_{pq}(y)-1}{y-1} \bmod{\Phi_{pq}(y) } \right|\right|_{\infty} \qquad (y=x^{p^{s-1}q^{t-1}})
\end{align*}

Now consider the following polynomial in $\Z[x]$. $$\tilde{u}(x)=\Phi_{pq}(x)+(p-1)\cdot \frac{\Phi_{pq}(x)-1}{x-1}-\sum^{p-1}_{i=1}\frac{x^{i\cdot q-p+2}-1}{x-1}$$
First, observe that $\deg(\tilde{u})\leq\phi(pq)-1$.
Then, by the following equalities, $\tilde{u}(x)$ satisfies the first condition of Theorem~\ref{thm:psqt} after being reduced by $\Phi_{pq}(x)$.
\begin{align*} 
\tilde{u}(x) \cdot (x^{p-1}-x^{p-2}) &=  \tilde{u}(x) \cdot x^{p-2} \cdot (x-1) \\ 
&= (p-1)\cdot x^{p-2}\cdot \left(\Phi_{pq}(x)-1\right)-\sum^{p-1}_{i=1}\left(x^{i\cdot q}-x^{p-2}\right) \\
&= -\sum^{p-1}_{i=1}x^{i\cdot q} \\
&= 1 \pmod{\Phi_{pq}(x)}
\end{align*}

Observe that the $0$th coefficient of $\tilde{u}(x)$ equals $-(p-2)$. This easily follows from the fact that $\Phi_{pq}(0)=1$. (Lemma \ref{lem:properties} (b)) 
Thus, $||\tilde{u}(x)||_\infty \geq p-2$. \qed

\end{proof}

Theorem~\ref{thm:psqt2} is an extension of Theorem~\ref{thm:psqt} with the help of Theorem~\ref{thm:ps}.
Theorem~\ref{thm:psqt2} says the coefficient size of the scaled inverse of $(x^i-x^j)$ is bounded by $q-1$ with the scale not greater than $q$ modulo $\Phi_{p^s q^t}(x)$. 
Regarding Remark~\ref{rem:scaled_inv_formula} and the proof of Theorem~\ref{thm:psqt2}, $u(x)$ is indeed the scaled inverse: coefficients of $u(x)$ is not divisible by the scale. 

\begin{theorem}\label{thm:psqt2}
Let $p$ and $q$ be primes satisfying $p<q$, and let $M=p^sq^t$ be an integer where $s$ and $t$ are positive integers.
For any integers $0\leq j<i<M$, there exists $u(x) \in \Z[x]/\Phi_M(x)$ such that \begin{itemize}[label=$\bullet$]
        \item $(x^i - x^j) \cdot u(x) = c \pmod{\Phi_{M}(x)}$,
        \item and $||u(x)||_{\infty} \le d$,
    \end{itemize}
    \quad where $(c, d)= \left\{ \begin{array}{ll}
        (q, q-1), & \qquad \text{if } p^s \mid (i-j)\\
        (p, p-1), & \qquad \text{if } q^t \mid (i-j)\\
        (1, p-1), & \qquad \text{otherwise. }
        \end{array} \right.$ 
\end{theorem}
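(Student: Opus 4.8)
The plan is to split into the three cases exactly as the statement does. The third case (neither $p^s\mid(i-j)$ nor $q^t\mid(i-j)$) is precisely Theorem~\ref{thm:psqt}, so nothing remains to be done there except to invoke it and observe that the claimed $u(x)$ has coefficients not divisible by $1$ trivially, hence it is the scaled inverse with scale $c=1$. So the real content is the two cases where $i-j$ is divisible by a full prime power.

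For the case $q^t\mid(i-j)$, the idea is to reduce to Theorem~\ref{thm:ps}. Write $M=p^sq^t$ and note that since $q^t\mid(i-j)$ but $0\le j<i<M$ (so $i-j<M$, forcing $p^s\nmid(i-j)$), we may write $i-j = q^t\cdot\delta$ with $(p,\delta)$ controlling only the $p$-part. The key algebraic fact is that modulo $\Phi_M(x)$ one can "descend" to $\Phi_{p^s}$: using $\Phi_M(x)=\Phi_{p^s}(x^{q^t})$ only partially helps, so instead I would mimic the proof of Theorem~\ref{thm:ps} directly. Let $p^\alpha$ be the largest power of $p$ dividing $i-j$ (so $\alpha\le s-1$ since $p^s\nmid(i-j)$), set $\beta:=(i-j)/(p^\alpha q^t)$ which is coprime to $p$, and form, in analogy with the $\Phi_{p^s}$ proof,
$$v(x):=\frac{\Phi_{p^s}(x^{\beta q^{t}/\,?})-p}{\cdots}$$
— more cleanly, substitute $y=x^{q^t}$ so that the problem becomes inverting $y^{i'}-y^{j'}$ modulo $\Phi_{p^s}(y)$ where $i'-j' = (i-j)/q^t$, and apply Theorem~\ref{thm:ps} over the ring $\Z[y]/\Phi_{p^s}(y)$, obtaining $u(y)$ with $(y^{i'}-y^{j'})u(y)=p\pmod{\Phi_{p^s}(y)}$ and $\|u(y)\|_\infty\le p-1$; then lift back via $y=x^{q^t}$, using $\Phi_M(x)=\Phi_{p^s}(x^{q^t})$ and that $x^i-x^j=x^j(x^{i-j}-1)=x^j(y^{i'}-y^{j'})$ up to a unit, so the same $u$ (in $x^{q^t}$) works with scale $p$ and $\|u\|_\infty\le p-1$. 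The norm is preserved because substituting $x^{q^t}$ for $y$ just spreads coefficients to distinct degrees without collision.

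For the case $p^s\mid(i-j)$, symmetrically $q^t\nmid(i-j)$, and the natural move is to substitute $y=x^{p^s}$ so $\Phi_M(x)=\Phi_{q^t}(x^{p^s})=\Phi_{q^t}(y)$ and reduce to inverting $y^{i'}-y^{j'}$ modulo $\Phi_{q^t}(y)$ where $i'-j'=(i-j)/p^s$ is not divisible by $q^t$; here $q$ plays the role of $p$ in Theorem~\ref{thm:ps}, yielding scale $c=q$ and coefficient bound $q-1$. Again the lift $y\mapsto x^{p^s}$ preserves the infinity norm since it only relabels degrees. Finally, in each case I would remark that $u(x)$ is the genuine scaled inverse: the coefficients of $u$ are bounded in absolute value by $p-1<p$ (resp.\ $q-1<q$), hence cannot all be divisible by any nontrivial divisor of $c=p$ (resp.\ $c=q$), so by Remark~\ref{rem:scaled_inv_formula} the scale cannot be reduced.

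The main obstacle is the bookkeeping in the substitution-and-lift argument: one must be careful that $x^i-x^j$ and $x^j(y^{i'}-y^{j'})$ differ by the unit $x^j$ modulo $\Phi_M(x)$ (since $x$ is a unit in $\Z[x]/\Phi_M(x)$ because $\Phi_M(0)=\pm1$), so the inverse picks up a harmless factor $x^{-j}$ — equivalently $x^{M-j}$ after clearing — which does not affect the infinity norm as multiplication by a power of $x$ modulo $\Phi_M(x)$ is, by Corollary~\ref{cor:exp_norm}-type reasoning, norm-nonincreasing in the relevant structured cases, but one should double-check this does not blow up the bound; in fact the cleanest route is to absorb the $x^{M-j}$ into the construction of $\tilde u$ from the start exactly as in the proofs of Theorems~\ref{thm:ps} and~\ref{thm:psqt}, rather than multiplying afterward. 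A secondary subtlety is confirming $\alpha\le s-1$ and $\beta\le t-1$ (or their analogues) from $0\le j<i<M$, which is what legitimizes applying Theorem~\ref{thm:ps} in the appropriate variable.
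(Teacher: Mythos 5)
Your skeleton matches the paper's: the third case is exactly Theorem~\ref{thm:psqt}, and the two divisible cases are handled by descending to the prime-power situation of Theorem~\ref{thm:ps} (the paper's construction for $p^s\mid(i-j)$ is $\tilde u(x)=-x^{M-j}\cdot\frac{\Phi_{q^t}(x^{p^s\gamma})-q}{x^{p^sq^\beta\gamma}-1}$, which is your substitution idea with the power of $x$ absorbed from the start). However, the step where you justify the norm bound has a genuine gap. First, the identity $\Phi_M(x)=\Phi_{p^s}(x^{q^t})$ is false: $\Phi_{p^s}(x^{q^t})=\prod_{d\mid q^t}\Phi_{p^sd}(x)$ has degree $\phi(p^s)q^t>\phi(M)$, so $\Phi_M(x)$ is only a proper divisor of it. Consequently the element $u_0(y)$ produced by Theorem~\ref{thm:ps}, rewritten as $u_0(x^{q^t})$, generally has degree at least $\phi(M)$ and must still be reduced modulo $\Phi_M(x)$; this reduction is governed by $R_{pq}\otimes I$ and does not merely ``relabel degrees,'' so the claim that the lift preserves $\|\cdot\|_\infty$ is unjustified. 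The extra factor $x^{M-j}$ makes this worse, not better: by Section~\ref{sec:expansion} multiplication by a power of $x$ modulo $\Phi_{p^sq^t}(x)$ has expansion factor up to $2p$, so it is certainly not norm-nonincreasing in general.

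What is actually needed, and what your proposal does not supply, is the content of Section~\ref{sec:prop_pq}: after reducing $\tilde u(x)$ modulo $x^M-1$ its monomials occupy distinct degrees all lying in a single residue class, and the key structural fact (Lemmas~\ref{lem:exp_sum_zero} and~\ref{lem:exp_nonzero_two}, packaged as Lemma~\ref{lem:exp_sum_norm} and Corollary~\ref{cor:exp_sum_norm_psqt}, together with the analogous statement for the other residue classes in the $q^t\mid(i-j)$ case) is that in each row of the reduction matrix the columns of such a class carry at most one $+1$ and one $-1$. Only then does each coefficient of $\tilde u(x)\bmod\Phi_M(x)$ become a difference of two coefficients of $\tilde u$, each lying in $[1-q,0]$ (resp.\ $[1-p,0]$), yielding the bound $q-1$ (resp.\ $p-1$). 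Saying the bound follows ``exactly as in Theorem~\ref{thm:ps}'' glosses over precisely this point, since the Theorem~\ref{thm:ps} argument exploits $R_{p^s}=R_p\otimes I$ with $R_p=(I\mid\mathbf{-1})$, a structure that $R_{p^sq^t}$ does not share. Your concluding remarks (that both prime powers cannot divide $i-j$, that $\alpha\le s-1$ and $\beta\le t-1$, and that the nonzero coefficients being smaller than the scale certify minimality of $c$ via Remark~\ref{rem:scaled_inv_formula}) are all correct.
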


\begin{proof}
If $p^s\nmid (i-j)$ and $q^t\nmid (i-j)$, use Theorem~\ref{thm:psqt} to get $u(x)$ with $(x^i - x^j) \cdot u(x) = 1 \pmod{\Phi_{M}(x)}$ and $||u(x)||_{\infty} \le p-1$.

If $p^s\mid (i-j)$, let $q{^\beta}$ be the largest power of $q$ dividing $i-j$, and let $\gamma:=(i-j)/(p^s q^\beta)$.
Consider the following polynomial $v(x) \in \Z[x]$. Note that $\beta \leq t-1$ by the assumption.
\begin{align*} 
v(x) &:= \frac{\Phi_{q^t}(x^{p^s \gamma})-q}{x^{p^s q^{\beta}\gamma}-1} \\ 
&=  \frac{\Phi_{q}(x^{p^{s} q^{t-1} \gamma})-q}{x^{p^{s} q^{t-1} \gamma}-1} \cdot \frac{x^{p^{s} q^{t-1} \gamma}-1}{x^{p^{s}q^{\beta}\gamma}-1}
\end{align*}
We claim that $\tilde{u}(x)=-x^{M-j}\cdot v(x) \in \Z[x]$ satisfies the conditions with $c=q$ after being reduced by $\Phi_{M}(x)$.
By definition, the first condition can be easily checked by the fact that $\Phi_{p^s q^t}(x)$ divides $\Phi_{q^t}(x^{p^s \gamma})$ since $p^s$, $q^t$, and $\gamma$ are mutually coprime. The second condition can be shown by the same argument in the proof of Theorem~\ref{thm:ps}.

If $q^t\mid (i-j)$, switch the role of $p$ and $q$ in the case of $p^s\mid (i-j)$. Then, we get $u(x)$ with $(x^i - x^j) \cdot u(x) = p \pmod{\Phi_{M}(x)}$ and $||u(x)||_{\infty} \le p-1$. \qed
\end{proof}

\section{Expansion Factors of $x^k$ modulo $\Phi_{p^s}(x)$ and $\Phi_{p^s q^t}(x)$} \label{sec:expansion}

In this section, we examine so-called \emph{expansion factors} of $\{x^k\}_{k\in\Z}$ in $\Z[x]/\Phi_{M}(x)$ with $M=p^s$ or $M=p^s q^t$. 
The \emph{expansion factor} of $f(x)$ in $\Z[x]/\Phi_M(x)$ is defined as the maximum value of $(||f(x)\cdot g(x)||_\infty / ||g(x)||_\infty)$ upon $g(x) \in \Z[x]/\Phi_M(x)$. 
The following lemmas say that the expansion factors of $\{x^k\}_{k\in\Z}$ modulo $\Phi_{p^s}(x)$ and $\Phi_{p^s q^t}(x)$ are not \emph{too large}. These lemmas are generalizations of the power-of-two case: the expansion factors of $\{x^k\}_{k\in\Z}$ modulo $\Phi_{2^s}(x)$ are 1, since multiplying $x^k$ in $\Z[x]/\Phi_{2^s}(x)$ acts as \emph{skewed}-rotation of the coefficients. 
The statements and proofs follow the framework of the $p$ case which is described in \cite{CKL21}. The results are also closely related to the quality of certain zero-knowledge proofs regarding lattice-based cryptosystems.

\begin{lemma} For $\mathcal{R}=\Z[x]/\Phi_{p^s}(x)$, the following equality holds.
$$\max_{\substack{ k \in \Z \\ g(x)\in \mathcal{R}}}\left\{\frac{||x^k\cdot g(x)||_\infty}{||g(x)||_\infty} \right\}=2$$
\end{lemma}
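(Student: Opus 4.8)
The plan is to prove the equality in two directions: first show the expansion factor is at most $2$, then exhibit a $g(x)$ and an exponent $k$ that achieve the bound. For the upper bound, the key observation is Remark~\ref{rem:tensor}: since $R_{p^s} = R_p \otimes I$, multiplication by $x^k$ in $\Z[x]/\Phi_{p^s}(x)$ decomposes, after reindexing coordinates modulo $p^{s-1}$, into $p^{s-1}$ independent copies of the action of multiplication by $x^{k'}$ in $\Z[x]/\Phi_p(x)$ (where $k'$ is the relevant quotient exponent), with no interference between blocks. So it suffices to bound the expansion factor of $x^{k'}$ modulo $\Phi_p(x)$ by $2$. For this, I would use the explicit form $R_p = (I \mid \mathbf{-1})$ recorded in Section~\ref{sec:prop_pq}: writing $g(x) = \sum_{\ell=0}^{p-2} g_\ell x^\ell$, the product $x^{k'} g(x) \bmod \Phi_p(x)$ has each coefficient equal to either $g_\ell$ for a single $\ell$, or $g_\ell - g_{p-2}$ for a single $\ell$ (coming from the ``wrap-around'' term whose reduction via $\Phi_p$ contributes a $-1$ in every coordinate), depending on whether the monomial $x^{k'+\ell}$ reduces to degree $< p-1$ or exactly to degree $p-1$. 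In either case the absolute value is at most $2\max_\ell |g_\ell| = 2\|g(x)\|_\infty$, giving the $\le 2$ bound; lifting back through the Kronecker structure, $\|x^k g(x)\|_\infty \le 2\|g(x)\|_\infty$ for all $k$ and all $g$ in $\mathcal{R}$.

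For the lower bound, I would exhibit an explicit example showing the factor $2$ is attained. Take $s=1$ first for clarity: let $g(x) = 1 - x + x^2 - \cdots = \sum_{\ell=0}^{p-2}(-1)^\ell x^\ell \in \Z[x]/\Phi_p(x)$, so $\|g\|_\infty = 1$, and consider $x \cdot g(x) \bmod \Phi_p(x)$. Multiplying by $x$ shifts, and the term $(-1)^{p-2} x^{p-1}$ reduces modulo $\Phi_p(x)$ to $-(-1)^{p-2}(1 + x + \cdots + x^{p-2})$; since $p$ is odd, $(-1)^{p-2} = -1$, so this contributes $+1$ to every coefficient, and in particular the constant coefficient of $x g(x) \bmod \Phi_p(x)$ becomes $1 + 1 = 2$ (or one checks a nearby coefficient reaches $\pm 2$ after the sign bookkeeping — the point is that the alternating signs of the shifted part and of the wrap-around term align in one coordinate). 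Hence $\|x g(x)\|_\infty / \|g(x)\|_\infty = 2$. For general $s$, one simply takes this $g$ in the appropriate single Kronecker block (i.e., replace $x$ by $x^{p^{s-1}}$ to embed the degree-$p$ example into $\Z[x]/\Phi_{p^s}(x)$) and multiply by $x^{p^{s-1}}$, which acts as the same $\Phi_p$-reduction within that block and leaves the other blocks at norm $\le 1$; the achieving coordinate still attains $2$.

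The main obstacle I anticipate is not conceptual but bookkeeping: getting the sign pattern in the lower-bound example exactly right so that two contributions of the same sign (size $1$ each) land in a single coefficient, rather than cancelling. The cleanest way around this is probably to not fix $g$ as the alternating polynomial but instead argue existentially — pick any $k$, look at the coordinate of $x^k g \bmod \Phi_p(x)$ of the form $g_\ell - g_{p-2}$, and choose the signs of $g_\ell$ and $g_{p-2}$ to be opposite, which is always possible for $p \ge 3$ since there are at least two free coefficients; this forces that coordinate to be $\pm 2$ while $\|g\|_\infty = 1$. The upper-bound direction is routine once the Kronecker decomposition is invoked, so the write-up should lead with that and spend its care on the extremal example.
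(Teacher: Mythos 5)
Your proposal is correct and follows essentially the same route as the paper: the upper bound comes from the fact that each row of the reduction matrix $R_{p^s}=R_p\otimes I$ has exactly two nonzero entries, each $\pm 1$ (so every output coefficient is a signed difference of at most two coefficients of $g$), and the lower bound comes from an explicit degree-$p$ example lifted to $\Phi_{p^s}(x)$ via $x\mapsto x^{p^{s-1}}$. The paper's witness is $x^{p-1}\cdot(1-x)$ rather than your alternating polynomial, but your ``choose $g_\ell$ and the wrap-around coefficient with opposite signs'' fallback is exactly the same two-coefficient trick, so the arguments coincide in substance.
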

\begin{proof}
Consider the reduction matrix of $\Phi_{p^s}(x)$. Since any row of the matrix has two nonzero elements and they are either $-1$ or $+1$, $||x^k \cdot g(x)||_\infty \leq 2\cdot ||g(x)||_\infty$ holds for all $0\leq k<p^s$. Thus, the expansion factors of $\{x^k\}_{k\in\Z}$ are not greater than 2.

Note that the $(p-1)$th coefficient of $x^{p-1}\cdot (-x+1) \bmod{\Phi_{p}(x)}$ is 2. Substituting $x$ with $x^{p^{s-1}}$, we can see that the expansion factor of $x^k$ in $\mathcal{R}$ with $k=(p-1)\cdot p^{s-1}$ equals 2. \qed
\end{proof}

\begin{lemma} For $\mathcal{R}=\Z[x]/\Phi_{p^s q^t}(x)$, the following equality holds.
$$\max_{\substack{k \in \Z  \\ g(x)\in \mathcal{R}}}\left\{\frac{||x^k\cdot g(x)||_\infty}{||g(x)||_\infty} \right\} = 2p$$
\end{lemma}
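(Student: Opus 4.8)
The proof of the final lemma should split into the usual two halves: an upper bound $2p$ coming from the structure of the reduction matrix $R_{p^s q^t}$, and a matching lower bound exhibited by an explicit $k$ and $g(x)$. For the upper bound I would start from Remark~\ref{rem:tensor}, which gives $R_{p^s q^t} = R_{pq}\otimes I$, so it suffices to understand how many nonzero entries a row of $R_{pq}$ can have, and with what signs. From the block description $R_{pq}=(I\mid B_1\mid B_2\mid B_3)$ and Corollary~\ref{cor:exp_norm}, every entry of $R_{pq}$ lies in $\{-1,0,1\}$; the identity block contributes one nonzero per row, $B_2$ is a sparse Toeplitz block contributing at most one per row, and $B_1,B_3$ each contribute at most $p-1$ per row (they have $p-1$ columns). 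This already bounds the number of nonzeros per row of $R_{pq}$ by $2p$, hence $||x^k\cdot g(x)||_\infty \le 2p\cdot ||g(x)||_\infty$ for all $k$; substituting $x\mapsto x^{M'}$ with $M'=p^{s-1}q^{t-1}$ transfers this to $\Phi_{p^s q^t}(x)$.

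**Sharpening the upper bound.** The crude count above gives $2p$ on the nose only if a single row of $R_{pq}$ can simultaneously have $p-1$ nonzeros in $B_1$, $p-1$ in $B_3$, one in $I$, and one in $B_2$. I expect that one should look at the row $\phi(pq)-1$ (or row $0$): by Corollary~\ref{cor:highest_coeff_is_one} the whole last row of $B_1$ is $+1$, and by Lemma~\ref{lem:exp_b_3} the last row of $B_3$ is a rotation of the \emph{first} row of $B_1$, which by Corollary~\ref{cor:lowest_coeff_is} is all $-1$. So in that row $B_1$ contributes $(p-1)$ copies of $+1$ and $B_3$ contributes $(p-1)$ copies of $-1$; together with the $I$ and $B_2$ contributions this is a row whose $L^1$ norm is close to $2p$. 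Taking $g(x)$ to be a $\{-1,0,1\}$-vector supported on exactly the columns where that row is nonzero, with signs chosen to match the row's signs (so all products add constructively), realizes $||x^k\cdot g(x)||_\infty$ equal to that $L^1$ norm while $||g(x)||_\infty=1$. One has to choose the shift $k$ so that the relevant row of $x^k\cdot g$ picks up exactly this row of $R_{pq}$; concretely $g(x)$ should be built from the columns $x^{j+ip}$ feeding into $B_1$ and $B_3$ plus the two columns hitting $I$ and $B_2$ in that row.

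**The main obstacle.** The delicate point is the exact bookkeeping of the $I$ and $B_2$ contributions and whether they can be made to \emph{add} to the $2(p-1)$ coming from $B_1\cup B_3$ rather than partially cancel; this is where I'd expect to lose or keep the last additive constant, and it is what pins the answer at exactly $2p$ rather than $2p-1$ or $2p-2$. I would resolve this by writing the candidate $g(x)$ explicitly — essentially $g(x)=\pm x^{c}\mp\big(\text{a sum of the appropriate }x^{j+ip}\big)$ mimicking the construction in Lemma~\ref{lem:pq_lower} — and directly reading off one coefficient of $x^k g(x)\bmod\Phi_{pq}(x)$ using Lemmas~\ref{lem:exp_b_1}–\ref{lem:exp_b_3}, then substituting $x\mapsto x^{M'}$. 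The sign-alignment argument (Lemma~\ref{lem:exp_sum_norm}-style reasoning in reverse: there we summed to get norm $\le 1$; here we want a row where the nonzeros do \emph{not} cancel) is the crux, but the block picture of $R_{pq}$ exhibited right after Corollary~\ref{cor:exp_norm} makes the needed row transparent, so this should go through without real difficulty once the candidate vector is written down.
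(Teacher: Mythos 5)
Your proposal matches the paper's proof essentially step for step: the same row-wise count on $R_{pq}=(I\mid B_1\mid B_2\mid B_3)$ (at most one nonzero per row from each of $I$ and $B_2$, at most $p-1$ from each of $B_1$ and $B_3$, all entries in $\{-1,0,1\}$, lifted to $R_{p^sq^t}$ via the Kronecker structure) for the upper bound, and the same row $\phi(pq)-1$ for the lower bound, which the paper realizes with $k=\phi(pq)-1$ and $g(x)=(1+x+\cdots+x^{p-1})-(x^{q}+x^{q+1}+\cdots+x^{q+p-1})$. The sign-alignment issue you flag as the main obstacle does resolve exactly as you hope: that row of $R_{pq}$ is $[0,\dots,0,+1 \mid +1,\dots,+1 \mid 0,\dots,0,-1 \mid -1,\dots,-1]$, so the $I$ and $B_2$ contributions add constructively to the $2(p-1)$ from $B_1\cup B_3$ and pin the value at exactly $2p$.
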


\begin{proof}
Consider each row of the reduction matrix $R_{pq}=(I|B_1|B_2|B_3)$. The matrices $I$ and $B_2$ contain at most one nonzero element in each row. Considering the dimensions of the matrix $B_1$ and $B_3$, they contain at most $p-1$ nonzero elements in each row. 
In total, any row of $R_{pq}$ has at most $2p$ nonzero elements and they are either $-1$ or $+1$ (Corollary \ref{cor:exp_norm}).
By Remark~\ref{rem:tensor}, any row of $R_{p^s q^t}$ also has at most $2p$ nonzero elements and they are either $-1$ or $+1$.
Therefore, $||x^i \cdot g(x)||_\infty \leq 2p\cdot ||g(x)||_\infty$ holds for all $i\in \Z$, and the expansion factors of $\{x^k\}_{k\in\Z}$ are not greater than $2p$.

Combining Corollary \ref{cor:lowest_coeff_is}, \ref{cor:highest_coeff_is_one} and Lemma \ref{lem:exp_b_2}, \ref{lem:exp_b_3}, the $(\phi(pq)-1)$th row of $R_{pq}$ is of the form $[0 , \cdots , 0 , +1 | +1,  \cdots , +1 | 0 , \cdots , 0 , -1 | -1, \cdots , -1 ]$. 
Thus, the $(\phi(pq)-1)$th coefficient of $x^{\phi(pq)-1}\cdot [(1+x+\cdots+x^{p-1})-(x^q+x^{q+1}+\cdots+x^{q+p-1})] \bmod{\Phi_{pq}(x)}$ is $2p$. Substituting $x$ with $x^{p^{s-1} q^{t-1}}$, we can see that the expansion factor of $x^k$ in $\mathcal{R}$ with $k=(\phi(pq)-1)\cdot p^{s-1} q^{t-1}$ equals $2p$. \qed
\end{proof}

\section{Open Problems}
An interesting problem is to generalize the results of this paper to ternary or even to arbitrary cyclotomic polynomials.
Another direction is to investigate coefficient sizes of scaled inverses modulo cyclotomic polynomials for a wider range of polynomials than $\{x^i-x^j\}_{i,j}$.
Besides $\{x^i\}$, constructing another large subset of $\Z[x]/\Phi_M(x)$ (i) whose elements have small expansion factors (ii) and whose differences of elements have small scaled inverses is also an interesting open problem. 

\bibliographystyle{alpha}
\bibliography{mybibliography}

\end{document}